\newtheorem{thm}{Theorem}[section]
\newtheorem{prop}[thm]{Proposition}
\newtheorem{lemma}[thm]{Lemma}
\newtheorem{cor}[thm]{Corollary}
\theoremstyle{definition}
\newtheorem{rem}[thm]{Remark}
\newcommand{\C}{\mathbb{C}}
\newcommand{\F}{\mathbb{F}}
\newcommand{\Q}{\mathbb{Q}}
\newcommand{\T}{\mathbb{T}}
\newcommand{\Z}{\mathbb{Z}}
\newcommand{\Fbar}{\overline{\F}}
\newcommand{\Qbar}{\overline{\Q}}
\newcommand{\Zbar}{\overline{\Z}}
\newcommand{\cA}{{\mathcal{A}}}
\newcommand{\cC}{{\mathcal{C}}}
\newcommand{\cM}{\mathcal{M}}
\newcommand{\cN}{\mathcal{N}}
\newcommand{\cO}{\mathcal{O}}
\newcommand{\cP}{\mathcal{P}}
\newcommand{\cI}{\mathcal{I}}
\newcommand{\cX}{\mathcal{X}}
\newcommand{\cY}{\mathcal{Y}}
\newcommand{\fb}{\mathfrak{b}}
\newcommand{\fc}{\mathfrak{c}}
\newcommand{\fd}{\mathfrak{d}}
\newcommand{\fm}{\mathfrak{m}}
\newcommand{\fn}{\mathfrak{n}}
\newcommand{\fo}{\mathfrak{o}}
\newcommand{\fp}{\mathfrak{p}}
\newcommand{\fq}{\mathfrak{q}}
\newcommand{\fr}{\mathfrak{r}}
\newcommand{\Ra}{\mathrm{Ra}}
\DeclareMathOperator{\End}{End}
\DeclareMathOperator{\Frob}{Frob}
\DeclareMathOperator{\Gal}{Gal}
\DeclareMathOperator{\GL}{GL}
\DeclareMathOperator{\PGL}{PGL}
\DeclareMathOperator{\id}{Id}
\DeclareMathOperator{\Spec}{Spec}
\DeclareMathOperator{\Spf}{Spf}
\DeclareMathOperator{\rH}{H}
\DeclareMathOperator{\Nm}{N_{F/\Q}}
\DeclareMathOperator{\Hom}{Hom}
\DeclareMathOperator{\Tr}{Tr}
\title[Unramifiedness of Galois representations mod~$p$]{
Unramifiedness of Galois representations attached to Hilbert modular forms mod~$p$ of weight $1$}
\author{Mladen Dimitrov \and Gabor Wiese}
\begin{document}
\maketitle

\begin{abstract}
The main result of this article states that the Galois representation attached to
a Hilbert modular eigenform  defined over~$\Fbar_p$ of parallel weight $1$ and level prime to $p$ is unramified above~$p$. 
This includes the important case of eigenforms that do not lift to Hilbert
modular forms in characteristic~$0$ of parallel weight $1$.
The proof is based on the observation that parallel weight $1$ forms 
in characteristic~$p$ embed into the ordinary part of parallel weight~$p$ forms in two different ways per prime
dividing~$p$, namely via `partial'  Frobenius operators.

\noindent
MSC: 11F80 (primary); 11F41, 11F33

\noindent
Keywords: Hilbert modular forms modulo~$p$, weight one, Galois representations
\end{abstract}

\section{Introduction}

Let $p$ be any prime number. 
Under Serre's Modularity Conjecture~\cite{serre}, proved by  Khare and Wintenberger~\cite{khare-wintenberger}, Hecke eigenforms for $\GL_2(\Q)$ over~$\Fbar_p$ correspond exactly to two-dimensional, odd, semi-simple  continuous representations
of the absolute Galois group of~$\Q$ over~$\Fbar_p$.  Using `minimal weights', as suggested by Serre and taken up by Edixhoven~\cite{edixhoven},
eigenforms of weight $1$ and level prime to~$p$ give rise to Galois representations that have the special property 
of being unramified at~$p$, and the converse  also holds at least when $p>2$ (see \cite{coleman-voloch},\cite{gross},\cite{wiese}).

In analogy  it is conjectured that two-dimensional, totally odd, semi-simple continuous representations over~$\Fbar_p$  of the absolute Galois group of a totally real number field~$F$ which are  unramified at a prime $\fp$ of $F$ dividing~$p$ correspond exactly to Hecke eigenforms for $\GL_2(F)$ over~$\Fbar_p$ of level prime to~$\fp$ and partial weight $1$ at $\fp$.

Our main theorem establishes one direction of this correspondence for parallel weight $1$ 
in full generality. Let  $G_F$ be the absolute Galois group of $F$.  
Fix for each prime $\fq$ of $F$  a decomposition group $D_{\fq}$ of $G_F$ and an arithmetic Frobenius  $\Frob_\fq$.

\begin{thm}\label{thm:main}
Let $f$ be a Hilbert modular form over $\Fbar_p$ of parallel weight $1$ and level~$\fn$ which is relatively prime to~$p$.
Assume that $f$ is a common eigenvector for the Hecke operators $T_\fq$ for all $\fq$ outside a finite set~$\Sigma$ of primes of~$F$ containing those dividing~$\fn$. 
Then there exists a continuous semi-simple representation
$$\rho_f: G_F \to \GL_2(\Fbar_p)$$
which is unramified outside $\fn$, and satisfies that for all primes $\fq\notin\Sigma$ the trace of 
$\rho_f(\Frob_\fq)$ equals the eigenvalue of $T_\fq$ on~$f$. 

In particular, $\rho_f$ is unramified at all primes $\fp$ dividing~$p$.
\end{thm}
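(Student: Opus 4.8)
The plan is to transport $f$ into parallel weight $p$, where Galois representations are available, and to extract the unramifiedness at $p$ from the special way a weight $1$ form sits inside weight $p$ in characteristic $p$.

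First I would construct $\rho_f$ and settle the trace condition and the ramification away from $p$. Multiplying $f$ by the total Hasse invariant $H$, a Hilbert modular form of parallel weight $p-1$ whose $q$-expansion is $\equiv 1$, produces a parallel weight $p$ eigenform $g=fH$ with the same $T_\fq$-eigenvalue as $f$ for every $\fq\notin\Sigma$. Since $p\ge 2$, the form $g$ has cohomological weight, so a continuous semisimple representation with the correct Frobenius traces and unramified outside $\fn p$ is supplied by the known construction of Galois representations attached to mod $p$ Hilbert eigenforms of weight $\ge 2$ (via congruences to characteristic $0$ and the Eichler--Shimura--Carayol--Taylor formalism). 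I set $\rho_f$ to be its semisimplification; by Chebotarev it depends only on the eigensystem $(T_\fq)_{\fq\notin\Sigma}$, in particular not on the auxiliary choices. As $\fn$ is prime to $p$, it remains only to prove that $\rho_f$ is unramified at every prime $\fp\mid p$.

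The heart of the matter is a feature special to parallel weight $1$ in characteristic $p$. A generic weight $p$ eigenform is ordinary at $\fp$ in only one way, since the two roots of its local Hecke polynomial at $\fp$ have product $\equiv 0 \bmod p$, so only one is a unit. By contrast, the weight $1$ form $f$ embeds into the ordinary part of parallel weight $p$ in \emph{two} ways per place $\fp\mid p$, both realized by the partial Frobenius operators $\Phi_\fp$ (corrected by multiplication with the appropriate partial Hasse invariants to return to parallel weight $p$), and the resulting $U_\fp$-eigenvalues $\alpha_\fp,\beta_\fp$ are the roots of the weight $1$ polynomial $X^2-a_\fp(f)X+\chi(\fp)$, whose constant term $\chi(\fp)$ is a unit. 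Thus both $\alpha_\fp$ and $\beta_\fp$ are units and $f$ yields two genuinely $\fp$-ordinary eigenforms sharing its prime-to-$p$ eigensystem. This is exactly the observation announced in the introduction.

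I would then invoke the standard local structure of ordinary Galois representations: for an $\fp$-ordinary parallel weight $p$ eigenform with unit $U_\fp$-eigenvalue $\lambda$, the restriction of the attached representation to $D_\fp$ has an unramified quotient on which $\Frob_\fp$ acts by $\lambda$. Applying this to both realizations of $f$, the single representation $\rho_f|_{D_\fp}$ admits both $\mu_{\alpha_\fp}$ and $\mu_{\beta_\fp}$ as unramified quotient characters, where $\mu_\lambda$ is the unramified character with $\mu_\lambda(\Frob_\fp)=\lambda$. When $\alpha_\fp\neq\beta_\fp$ these two quotient maps have distinct kernels, so the induced map $\rho_f|_{D_\fp}\to\mu_{\alpha_\fp}\oplus\mu_{\beta_\fp}$ is an isomorphism and $\rho_f|_{D_\fp}$ is unramified, as required. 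The main obstacle is twofold: constructing the two ordinary embeddings and pinning down their $U_\fp$-eigenvalues is a genuinely geometric task demanding the partial Hasse invariants, the partial Frobenius/Verschiebung correspondences on the characteristic $p$ Hilbert modular variety, and Hida's $\fp$-by-$\fp$ ordinary idempotents, together with the verification that each partial-Frobenius image is a true $\fp$-ordinary eigenform with the claimed eigenvalue. Secondly, the degenerate case $\alpha_\fp=\beta_\fp$ escapes the splitting argument: there $\det\rho_f$ is still unramified at $\fp$ (parallel weight $1$ kills the cyclotomic contribution, leaving the prime-to-$p$ nebentypus), so both diagonal characters are unramified, but a possibly ramified self-extension must be excluded using both partial Frobenius structures. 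I expect this degenerate case, and the precise construction of the partial Frobenius embeddings, to be where the real work lies.
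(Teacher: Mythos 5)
Your overall strategy matches the paper's: pass to weight $p$ (in fact the paper uses weight $k=1+k_0(p-1)$ to guarantee liftability to characteristic $0$), exploit the ``doubling'' of a weight $1$ form inside the ordinary part of weight $p$ via operators built from $T_\fp$ and the Hasse invariant, and apply Wiles' local description of ordinary Galois representations to produce unramified quotients of $\rho_f|_{D_\fp}$. Your treatment of the $p$-distinguished case $\alpha_\fp\neq\beta_\fp$ is the paper's argument. However, there is a genuine gap: you explicitly leave open the case $\alpha_\fp=\beta_\fp$ with $\rho_f$ irreducible, and this is precisely where the paper's real work lies; acknowledging that ``a possibly ramified self-extension must be excluded'' is a statement of the problem, not a proof. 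The paper resolves it by a commutative-algebra argument that your sketch contains no trace of: using Carayol's theorem one obtains a free rank-$2$ module $\cM$ over the localised prime-to-$p$ Hecke algebra $\T'_k$ carrying $\rho$, with an exact sequence $0\to\cM^+\to\cM\to\cM^-\to 0$ of $D_\fp$-modules coming from ordinarity. If $\cM^-/\fm'\cM^-$ were $1$-dimensional, Nakayama would force $\cM^-$ to be free of rank $1$ over $\T'_k$, and $\Frob_\fp$ would then act on $\cM^-$ through an element of $\T'_k$ reducing to $T_\fp^{(k)}$, i.e.\ $T_\fp^{(k)}$ would lie in $\T'_k\otimes\Fbar_p$. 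This is ruled out by the key fact (the paper's Lemma~\ref{lem:key}) that $T_\fp^{(p)}$ acts \emph{non-semisimply} on the $2$-dimensional doubled space $W_\fp$ on which $\T'$ acts by a character --- the non-semisimplicity being exactly the content of the double-root hypothesis. Hence $\cM^-/\fm'\cM^-$ is $2$-dimensional, $\cM/\fm'\cM\simeq\cM^-/\fm'\cM^-$ is unramified, and one even reads off $\Tr\rho_f(\Frob_\fp)=2\alpha_\fp=\lambda_\fp$. Without this step (or a substitute for it) your argument does not establish the theorem in the non-$p$-distinguished case, which is unavoidable in general.

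Two further points you should not treat as routine. First, the construction of the operator $T_\fp^{(k)}$ on the \emph{whole} space of mod~$p$ forms of level prime to $p$ (not just on reductions of characteristic-$0$ forms) occupies a substantial part of the paper and requires a nontrivial geometric input, namely that $\mathrm{R}^1\pi_{1*}\pi_2^*\overline{\underline{\omega}}^{\otimes k}_{\F_p}$ is supported in codimension $\geq 2$; without some such construction the operators you call $\Phi_\fp$ and the space they span are not defined. Second, the paper must separately dispose of the constant forms (where $\rho_f$ is a sum of two characters of $\cC\ell_F^+$ read off from the constant term) and of the globally reducible case (where one character is unramified by ordinarity and the other by examining $\det\rho_f=\varepsilon$); your sketch conflates the latter with the irreducible double-root case, but for semisimple reducible $\rho_f$ the determinant argument already suffices and no extension needs to be excluded.
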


This theorem has been proved independently by Emerton, Reduzzi and Xiao~\cite{ERX} under the additional assumptions that $p$ is inert in $F$ and the Hecke polynomial at $p$ has two distinct roots: the $p$-distinguished case. The converse has been proved in the work on  companion forms~\cite{gee-kassaei} of Gee and Kassaei   under the assumptions that $p$ is odd, $\rho$ is modular and $p$-distinguished,  and some other hypotheses.

We see our main result as an important step towards a Serre type modularity conjecture over~$F$ which includes as fine arithmetic information as possible (in particular, on ramification).
Serre's Modularity Conjecture has been formulated for  $\GL_2$ over~$F$ by Buzzard, Diamond and Jarvis~\cite{BDJ}  {\em et al.}, and the weight part of this conjecture has already been established (see~\cite{GLS}). However, due to its formulation in terms of Betti cohomology, the Buzzard-Diamond-Jarvis conjecture excludes the weight $1$ forms.

Extending earlier work of Deligne and Serre~\cite{deligne-serre}, Ohta~\cite{ohta},
Rogawski and Tunnell~\cite{rogawski-tunnell} attach to any complex holomorphic parallel weight $1$ Hilbert modular newform
a two-dimensional irreducible totally odd Artin representation of~$G_F$ which is unramified outside the 
level of the form.
In a series of works by Kassaei, Pilloni, Sasaki, Stroh and Tian, building on a strategy of Buzzard and Taylor~\cite{buzzard-taylor},
it has been shown that the converse also holds.
However, since Hilbert modular forms over~$\Fbar_p$ do not necessarily lift to characteristic~$0$ in the same weight,
the main result of the present article cannot be deduced from those works. 

In  \cite{jasper}, Jasper Van Hirtum describes an algorithm for computing spaces of Hilbert modular forms of parallel weight $1$,
which we expect to provide explicit examples of non-liftable parallel weight $1$ Hilbert eigenforms.
In the perspective of the inverse Galois problem, this will provide examples of Galois extensions of~$F$ with group $\PGL_2(\F_p)$
the unramifiedness above~$p$ of which would follow from our main theorem, but would be very hard to check without it since one would
in general not be able to write down an explicit polynomial giving the extension.
Such fields are also expected to have a small root discriminant.

A substantial part of the paper is devoted to constructing, for  $\fp$ a prime dividing~$p$, a 
Hecke operator $T_\fp^{(k)}$ acting on the space of Hilbert modular forms over~$\F_p$  of parallel weight $k\geq 1$ and  level which is relatively prime to $p$.  Although $T_\fp^{(k)}$ is  uniquely determined  by its action on  $q$-expansions, hence {\it a fortiori} by its action on $p$-adic modular forms (over the ordinary locus), its definition in the current state of the knowledge seems to require some geometric input from the non-ordinary locus of the Hilbert modular variety. Our construction relies on a fine geometric result from \cite{ERX} stating roughly that for our purposes the non-ordinary locus  can be ignored (see \S\ref{sec:3.3.4}). A different construction of $T_\fp^{(k)}$ was recently announced by V.~Pilloni. 
 
A specific  feature of our treatment is that we compute the action of $T_\fp^{(k)}$ on adelic $q$-expansions, which we
expect  to be also useful for  questions on Hilbert modular forms over~$\Fbar_p$ beyond the ones treated in this article.

\begin{thm}[Effect of $T_\fp$ on $q$-expansions]\label{thm:Tp-q}
For any Hilbert modular form over~$\Fbar_p$ of parallel weight $k\geq 1$ and prime to~$p$ level,  and for any prime $\fp$ of $F$ dividing $p$
we have: 
 \begin{equation}\label{eq:Tp-general}
\begin{split}
a((0), T_\fp^{(k)} f) & = a((0),f)[\fp] +\Nm(\fp)^{k-1} a((0),\langle\fp\rangle f)[\fp^{-1}],   \text{ and}\\
a(\fr, T_\fp^{(k)} f) & = a(\fp \fr,f) +\Nm(\fp)^{k-1} a(\fr/\fp,\langle\fp\rangle f) \text{ for all ideals } (0)\neq\fr\subset \fo.
\end{split}
\end{equation}

Here  $a(\fr, f)$ denotes the $\fr$-th coefficient in the adelic $q$-expansion of $f$ (see \S\ref{adelic-q-exp}) and
$\langle\fp\rangle$ denotes the diamond operator (see \S\ref{diamond}). 
\end{thm}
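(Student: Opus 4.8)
The plan is to evaluate $T_\fp^{(k)}f$ on the Tate object and read off its coefficients, using that the adelic $q$-expansion is by definition the value of a form at the cusps, where the universal Hilbert--Blumenthal abelian variety degenerates to the (ordinary) Tate semi-abelian scheme. Since the whole computation takes place over the ordinary locus, the first and crucial step is to identify there the restriction of the geometrically defined $T_\fp^{(k)}$ with the na\"ive Hecke correspondence attached to $\fp$-isogenies, that is, to $\fo$-stable finite flat subgroup schemes $C\subset A[\fp]$ that are locally free of rank $\Nm(\fp)$ and killed by $\fp$. This is exactly what the construction of $T_\fp^{(k)}$ in \S3 together with the geometric input of \cite{ERX} (\S3.3.4) provide: away from the non-ordinary locus, where the correspondence is ill-behaved, $T_\fp^{(k)}$ is this summation operator with its weight-$k$ normalization.

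Next I would recall the Tate object at a cusp: for a fractional ideal $\fc$ it is a degeneration $\mathrm{Tate}_\fc(q)$ built from $\mathbb{G}_m\otimes\fd^{-1}$ and the period lattice $q^{\fc}$, with its canonical $\fo$-action, polarization and prime-to-$p$ level structure, and the value of $f$ on it is the generating series $\sum_{\fr}a(\fr,f)\,q^{\fr}$. Over the ordinary locus the $\fp$-torsion $\mathrm{Tate}_\fc(q)[\fp]$ carries a connected--\'etale filtration whose connected part is the canonical subgroup $\mu_{\fo/\fp}$ coming from $\mathbb{G}_m\otimes\fd^{-1}$. The $\fp$-isogeny correspondence accordingly splits into two contributions: the quotient by the canonical subgroup, and the quotients by the complementary (``partial Frobenius'') subgroups mapping isomorphically onto the \'etale quotient $\fo/\fp$.

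Then I would compute each contribution on $\mathrm{Tate}_\fc(q)$, passing to the $\fp$-th root cover where needed. The quotient by the canonical subgroup $\mu_{\fo/\fp}$ raises the $q$-parameter to its $\fp$-th power and changes the polarization module by $\fc\mapsto\fp\fc$; transporting the invariant differential across this multiplicative-kernel isogeny produces the normalization factor $\Nm(\fp)^{k-1}$, and the resulting reindexing contributes exactly $\Nm(\fp)^{k-1}a(\fr/\fp,\langle\fp\rangle f)$, the diamond operator $\langle\fp\rangle$ recording the change of polarization class. The complementary partial-Frobenius quotients instead replace the $q$-parameter by an $\fp$-th root; summing over them isolates, in the coefficient of $q^{\fr}$, precisely the term $a(\fp\fr,f)$, with trivial normalization factor since these isogenies are \'etale and hence induce an isomorphism on invariant differentials. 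Adding the two gives the second identity in \eqref{eq:Tp-general} for $\fr\neq(0)$; the constant terms are obtained in the same way, the two shifts of connected component by the classes of $\fp$ and of $\fp^{-1}$ producing the symbols $[\fp]$ and $[\fp^{-1}]$.

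The step I expect to be the main obstacle is precisely the first one: verifying that the operator produced by the geometric construction of \S3 restricts on the ordinary locus to the $\fp$-isogeny correspondence with the correct weight-$k$ normalization, rather than to a twist or a scalar multiple of it. Once this is secured, the rest is the by-now classical manipulation of $q$-expansions through roots of unity together with the tracking of the $\fo$-module and polarization data across each quotient; the only genuinely new bookkeeping is adelic, namely keeping the ideals $\fr$ and the class-group twists $\langle\fp\rangle$, $[\fp^{\pm1}]$ correct throughout.
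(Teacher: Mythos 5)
Your proposal is correct and follows essentially the same route as the paper: the paper also reduces the computation to the two degenerations of the $\fp$-isogeny correspondence at the cusp, packaging your ``canonical subgroup'' and ``partial Frobenius'' contributions as the two cusps $\infty_\fc$ and $0_\fc$ of $\overline{Y(\fc)}$ above $\infty(\fc)$ (Proposition~\ref{formal-cusps} and Corollary~\ref{formal-completion}), whose completed local rings give exactly your two terms via the inclusion $\iota_\fp$ (yielding $\Nm(\fp)^{k-1}(\langle\fp\rangle f)_{\fc\fp}$ after the weight-$k$ differential factor) and the normalised trace $t_\fp$ (yielding $a(\fp\fr,f)$), and it likewise delegates the identification of the geometric operator with this correspondence away from the non-ordinary locus to the construction of \S3 and the input from \cite{ERX}. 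The one point where your phrasing needs care is the \'etale contribution: the ``sum over roots of unity divided by $\Nm(\fp)$'' cannot be performed in characteristic~$p$ (the literal trace of the residually inseparable extension $M_\infty(\fc;\F_p)\subset M_\infty(\fc\fp^{-1};\F_p)$ vanishes), so the whole computation must be organised integrally over $\Z_p$ at the cusps and then reduced, which is precisely what diagram~\eqref{defn-Tp} and Proposition~\ref{Tp-integral} accomplish.
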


\noindent 
Let us now highlight some  delicate points in   the proof of the main theorem. We will only work with parallel weights  so the word `parallel' will henceforth be dropped.

Our definition of Hilbert modular forms  is in the spirit of Wiles and corresponds to  Katz modular forms
which are invariant under the action of a  finite group. More precisely, as explained in \S\ref{hmf},  we use forms living on the Hilbert modular variety
classifying Hilbert-Blumenthal abelian varieties modulo the action of the totally positive units on polarisations.
Working with modular forms \`a la Wiles is necessary in order to have a good Hecke theory, {\em i.e.}, it seems to be the right setting
for working with Galois representations. Moreover,  over~$\C$  those are precisely  the
automorphic forms on $\GL_2$ studied by Shimura in~\cite{shimura}.
The technical complications arising from the fact that the relevant Hilbert modular variety is only a finite quotient of the fine moduli scheme are properly addressed.
The literature is not as complete and as uniform in terms of definitions as
one would wish; we have nonetheless tried to include as many and as precise references as possible.

Our general strategy is to go to weight~$p$ where we exploit the subtle phenomenon of `doubling' for  Hecke algebras
(failure to strong multiplicity one in characteristic~$p$; see \cite{wiese}).
Whereas the overall strategy of proof is very similar to the one in \cite{wiese}, the actual techniques used here
are quite different since many of the arguments in {\em loc.\ cit.} seem to be  specific to $F=\Q$.

A crucial insight in the non-$p$-distinguished case is that a certain ordinary modular deformation of $\rho_f$ is not ordinary over its algebra of traces (see Remark \ref{u-operator}).

\subsection*{Further directions}
We  expect that for any $f$ as in Theorem \ref{thm:main} for which $\rho_f$ is irreducible one can  prove the unramifiedness above~$p$ of the Galois representation
with coefficients in the  weight $1$ Hecke algebra $\T$ over~$\Fbar_p$ of level $\fn$ localised at
the maximal ideal corresponding to $f$. 
It is further expected that $\T$ is in fact the universal deformation Artinian $\Fbar_p$-algebra of $\rho_f$ unramified outside~$\fn$  with prescribed local properties at primes dividing~$\fn$.
This has been proved  for   $F=\Q$ and $p>2$  by Calegari and Geraghty~\cite{calegari-geraghty}, who even establish more, namely that all unramified at~$p$ minimal deformations of $\rho_f$ are modular of weight $1$.
Implementing the  method of Calegari and Geraghty for  arbitrary $F$ needs as a crucial ingredient the unramifiedness above~$p$ of the Galois representations attached to eigenclasses in  the coherent cohomology of the Hilbert modular variety in any degree (not only to those  corresponding to modular forms which 
occur in degree $0$).  This seems to require new ideas (see \cite{ERX} for a partial result in the real quadratic case).

\subsection*{Acknowledgements}

We would like to thank heartily Hwajong Yoo for his interest in this project and for many stimulating discussions. Our thanks go next to  Beno\^\i{}t Stroh, Vincent Pilloni and Liang Xiao for generously sharing their ideas on how one can define the Hecke operators in positive characteristic. Thanks are also due to the referee, to Pierre Charollois and  Brian Conrad for providing us with some useful bibliographical references, and to Fabrizio Andreatta, Frank Calegari, Payman Kassaei and Shu Sasaki for helpful discussions.

Further we thank Kai-Wen Lan for pointing out a problem in an earlier version of this manuscript, which was resolved using some recent results of Emerton, Reduzzi and Xiao, which were generously communicated to us by the latter.  

The first named author (M.D.) would like to thank the University of Luxembourg -- where most of our discussions took place -- for its hospitality. The second named author (G.W.) acknowledges support for this research by the University of Luxembourg Internal Research Project GRWTONE and also partial support by the Fonds National de la Recherche Luxembourg INTER/DFG/12/10/COMFGREP.

\section{Geometric Hilbert modular forms}\label{sec:HMF}

In this section we recall and adapt some results from 
Andreatta-Goren~\cite{andreatta-goren}, Chai~\cite{chai}, Deligne-Pappas~\cite{deligne-pappas}, 
Dimitrov \cite{dim-hmv,dimdg}, Dimitrov-Tilouine~\cite{dimtildg}, Hida \cite{hida-PAF}, Kisin-Lai~\cite{kisin-lai} and 
Rapoport~\cite{rapoport}.

Let $F$ be a totally real field of degree $d\geq 2$, ring of integers~$\fo$ and different~$\fd$.  Fix an ideal $\fn\subset \fo$ (the level). 
For any fractional ideal  $\fc$ of $F$ we denote by $\fc_+=\fc\cap F_+^\times$  the cone of its 
totally positive elements.  The set of equivalence classes of $\fc$ modulo the action of $F_+^\times$ 
can be naturally identified with the narrow class group $\cC\ell_F^+$ of $F$. 
By abuse of notation, we will also denote by $\cC\ell_F^+$ a fixed set of representatives.

\subsection{Hilbert modular varieties}\label{hmv}

Recall that a Hilbert-Blumenthal abelian variety  over a scheme $S$ is an abelian scheme $\pi:A\rightarrow S$ of relative dimension $d$ together with an injection   $\fo\hookrightarrow \End(A/S)$. 
The dual abelian variety of  $A$ is denoted by $A^\vee$.
 
A $\fc$-polarisation on a Hilbert-Blumenthal abelian variety  $A/S$ is an $\fo$-linear
isomorphism $$\lambda:A\otimes_{\fo}\fc \overset{\sim}{\longrightarrow} A^\vee$$
such that the  induced isomorphism $\Hom_{\fo}(A,A\otimes_{\fo}\fc)
\simeq \Hom_{\fo}(A,A^\vee)$ sends  $\fc$ (resp.\ $\fc_+$) onto the 
$\fo$-module of symmetric elements $\cP(A)$ (resp.\ onto the cone of polarisations $\cP(A)_+$).
 
A $\mu_{\fn}$-level structure on a Hilbert-Blumenthal abelian variety $A/S$
is an $\fo$-linear closed immersion $\mu:\mu_{\fn}\otimes \mathfrak{d}^{-1}  \hookrightarrow A$ of  group schemes over $S$, 
where  $\mu_{\fn}$ denotes the reduced  subscheme of $\mathbb{G}_m \otimes  \mathfrak{d}^{-1}$  defined
as the intersection of the kernels of multiplication by elements of $\fn$. Assume that
\begin{equation}\label{NT}
 \fn \text{ divides neither }2, \text{ nor } 3, \text{ nor the discriminant } \Nm(\fd). 
\end{equation}
Then the functor assigning to a $\Z[\frac{1}{\Nm(\fn)}]$-scheme $S$ the set of isomorphism classes
of tuples $(A,\lambda,\mu)$ as above is representable by a geometrically connected 
quasi-projective scheme $X^1_1(\fc,\fn)\to \Z[\frac{1}{\Nm(\fn)}]$, which is a relative complete intersection, flat and 
has normal fibres (see \cite[Theorem 2.2 and Corollary 2.3]{deligne-pappas}). We denote by $\pi:\cA(\fc)\rightarrow X^1_1(\fc,\fn)$ the universal abelian variety. 

Recall that the Rapoport locus $X^1_1(\fc,\fn)^\Ra$ of $X^1_1(\fc,\fn)$,  defined as the locus where  the sheaf
$\pi_* \Omega^1_{\cA(\fc)/X^1_1(\fc,\fn)}$ is locally free of rank one over  $\fo\otimes \cO_{X^1_1(\fc,\fn)}$, 
is an open sub-scheme, which is smooth. Its complement is contained in  
fibres in characteristics dividing $\Nm(\fd)$ and has codimension at least $2$ in those fibres.
In particular, $X^1_1(\fc,\fn)$ is smooth over $\Z[\frac{1}{\Nm(\fn\fd)}]$.

Consider the following invertible sheaf on $X^1_1(\fc,\fn)$:
$$\underline{\omega^1}:=\det( \pi_* \Omega^1_{\cA(\fc)/X^1_1(\fc,\fn)})=\wedge^d_{ \cO_{X^1_1(\fc,\fn)}} \pi_* \Omega^1_{\cA(\fc)/X^1_1(\fc,\fn)}.$$ 

Make the following, stronger than~\eqref{NT},  assumption: 
\begin{equation}\label{ass:21}
\begin{split}
& \text{$\fn$ is divisible by  a prime number $q$ which  splits completely in $F(\sqrt{\epsilon} \,|\,
\epsilon\in\fo_+^\times)$} \\
& \text{and $q\equiv -1\pmod{4\ell}$ for all prime numbers $\ell$ such that
$[F(\mu_\ell):F]=2$. }
\end{split}
\end{equation}
Under the assumption  ~\eqref{ass:21} the finite group 
\begin{equation}\label{units}
E:=\fo_+^\times/\{\epsilon\in \fo^\times| \epsilon-1\in \fn\}^2
\end{equation}
acts properly and discontinuously on
$X^1_1(\fc,\fn)$ by  $$[\epsilon]:(A,\lambda,\mu)/S\mapsto (A,\epsilon\lambda,\mu)/S,$$ 
yielding an étale morphism $\phi: X^1_1(\fc,\fn)\to X_1(\fc,\fn) $ with group $E$
(\cite[Lemma~2.1]{dim-ihara}). 
It follows that  $X_1(\fc,\fn)\to \Z[\frac{1}{\Nm(\fn)}]$ is  geometrically connected,  flat with normal fibres.
The similarly defined open subscheme $X_1(\fc,\fn)^\Ra$ of $X_1(\fc,\fn)$ is smooth and its complement is contained in  
fibres in characteristics dividing $\Nm(\fd)$ with codimension at least $2$ in those fibres.
Moreover  $\underline{\omega^1}$ descends to an invertible sheaf on  $X_1(\fc,\fn)$, denoted $\underline{\omega}$ (see \cite[\S4]{dimtildg}), such that its pull-back is~$\underline{\omega^1}$.

Let $\displaystyle X^1_1(\fn)=\coprod_{\fc} X^1_1(\fc,\fn) \text{ and } X_1(\fn)= \coprod_{\fc} X_1(\fc,\fn)$, 
where $\fc$ runs over the fixed set of representatives of $\cC\ell_F^+$.
The latter is called the Hilbert modular variety of level $\Gamma_1(\fn)$, since its complex points can be identified with those of a Shimura variety for $\GL_2(F)$. Note that the points of $X_1(\fn)$ over any $\Z[\frac{1}{\Nm(\fn)}]$-scheme $S$ are naturally given by isomorphism classes of  Hilbert-Blumenthal abelian varieties $A/S$ endowed with a $\mu_\fn$-level structure and satisfying the Deligne-Pappas condition that the induced morphism
$A \otimes_{\fo}\cP(A) \to A^\vee$ is an isomorphism.     
Thus $X_1(\fn)$ is independent of the choice of representatives in $\cC\ell_F^+$.

\subsection{Hilbert modular forms of parallel weight}\label{hmf}

Fix an integer~$k \ge 1$ and an ideal $\fn\subset \fo$. For any   $\Z[\frac{1}{\Nm(\fn)}]$-algebra~$R$ one can define as in Katz \cite{katz} the space of $\fc$-polarised Hilbert modular forms over $R$ 
of  weight $k$ and  level $\fn$.  As observed in \cite[\S1.4]{kisin-lai}, under the assumption 
(\ref{NT}) this space is given by: 
$$   \rH^0(X^1_1(\fc,\fn) \times \Spec(R), (\underline{\omega^1})^{\otimes k})
= \rH^0(X^1_1(\fc,\fn)^\Ra \times \Spec(R), (\underline{\omega^1})^{\otimes k}),$$  
where the equality follows from an algebraic version of Hartogs' theorem, since $X^1_1(\fc,\fn)$ is Noetherian with normal fibres,
$(\underline{\omega^1})^{\otimes k}$ is locally free and the complement has codimension at least~$2$ in each fibre.

In this paper, we will however reserve the name of Hilbert modular forms to those forms which are invariant under the action of the finite group $E$ defined in~(\ref{units}).
In fact, in characteristic~$0$, those are the only forms for which there is a satisfactory  Hecke theory allowing to attach Galois representations to eigenforms. 

Note that it suffices to prove Theorem~\ref{thm:main} under the assumption~\eqref{ass:21}, which we can and will always assume in the sequel. For a   $\Z[\frac{1}{\Nm(\fn)}]$-algebra~$R$, 
the space of Hilbert modular forms over $R$ of  weight $k$ and  level $\fn$  is then defined as the $R$-module:
\begin{equation}
M_k(\fn; R):= \rH^0(X_1(\fn) \times \Spec(R),\underline{\omega}^{\otimes k}).
\end{equation}

We have the following decomposition  $\displaystyle M_k(\fn; R)=\bigoplus_{\fc\in \cC\ell_F^+} M_k(\fc,\fn; R)$
with $\fc$ running over the fixed set of representatives of $\cC\ell_F^+$, where 
$$M_k(\fc,\fn; R)=\rH^0(X_1(\fc,\fn) \times \Spec(R),\underline{\omega}^{\otimes k}).$$

\subsection{Minimal compactification}

Following \cite[\S4]{chai} (see also \cite{dimdg}) one defines the minimal
compactification of $X^1_1(\fc,\fn)$ as the  $\Z[\frac{1}{\Nm(\fn)}]$-scheme given by
$$\overline{X^1_1(\fc,\fn)}= \mathrm{Proj}\left(\underset{k\geq 0}{\bigoplus}  
\rH^0(X^1_1(\fc,\fn),(\underline{\omega^1})^{\otimes k})\right).$$

The scheme $\overline{X^1_1(\fc,\fn)}$  is projective, normal, hence flat (see \cite[IV.6.8.1]{EGA}) over $\Z[\frac{1}{\Nm(\fn)}]$ and contains  $X^1_1(\fc,\fn)$  as a dense open sub-scheme the complement of which is finite over $\Z[\frac{1}{\Nm(\fn)}]$.

The action of the finite group  $E$ on $X^1_1(\fc,\fn)$ extends to an action on $\overline{X^1_1(\fc,\fn)}$ and the minimal compactification $\overline{X_1(\fc,\fn)}$ of $X_1(\fc,\fn)$ is defined as the quotient.  
The $\Z[\frac{1}{\Nm(\fn)}]$-scheme $\overline{X_1(\fc,\fn)}$ is projective, normal, flat over $\Z[\frac{1}{\Nm(\fn)}]$
and contains  $X_1(\fc,\fn)$  as a dense open sub-scheme.  Alternatively,
one can also define $\overline{X_1(\fc,\fn)}$ as $\mathrm{Proj}\left(\underset{k\geq 0}{\oplus}  \rH^0(X_1(\fc,\fn),\underline{\omega}^{\otimes k})\right)$.

\begin{lemma}
The  invertible sheaf   $\underline{\omega} $ on $X_1(\fc,\fn)$ extends to an ample  invertible sheaf  $\overline{\underline{\omega}}$  on  $\overline{X_1(\fc,\fn)}$.
\end{lemma}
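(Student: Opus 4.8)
The plan is to construct the extension $\overline{\underline{\omega}}$ by exploiting the definition of $\overline{X_1(\fc,\fn)}$ as a $\mathrm{Proj}$ of the graded ring $\bigoplus_{k\geq 0} \rH^0(X_1(\fc,\fn),\underline{\omega}^{\otimes k})$. The tautological construction of $\mathrm{Proj}$ of a graded ring furnishes a sheaf $\cO(1)$, and the first point will be to identify its restriction to the open dense subscheme $X_1(\fc,\fn)$ with $\underline{\omega}$ itself. More precisely, I would set $\overline{\underline{\omega}}:=\cO(1)$ and verify that on the locus where sections of $\underline{\omega}^{\otimes k}$ generate, the pullback agrees with $\underline{\omega}$; since $\underline{\omega}$ is already ample (being $\underline{\omega^1}$ descended, and $\underline{\omega^1}$ is ample on $X^1_1(\fc,\fn)$ by the Deligne-Pappas description) the algebra is generated in sufficiently high degree and $\cO(1)$ is a genuine invertible sheaf on $\overline{X_1(\fc,\fn)}$ away from finitely many points.

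The subtlety, and the step I expect to be the main obstacle, is that $\mathrm{Proj}$ of a graded ring only yields an invertible $\cO(1)$ when the ring is generated in degree $1$ (or when one passes to a Veronese subalgebra), and a priori the ring of modular forms need not be generated in degree $1$. So the honest approach is to first descend along $\phi: X^1_1(\fc,\fn)\to X_1(\fc,\fn)$. On $\overline{X^1_1(\fc,\fn)}$ one has a well-understood ample extension of $\underline{\omega^1}$, coming from the interpretation of sections of $(\underline{\omega^1})^{\otimes k}$ as sections over the cusps via $q$-expansions; this is the content of the Koecher principle and the standard theory of the minimal compactification in \cite{chai}. I would invoke that $\overline{\underline{\omega^1}}$ exists as an ample invertible sheaf on $\overline{X^1_1(\fc,\fn)}$ and then show it descends under the finite group $E$.

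To carry out the descent, I would use that $E$ acts on $\overline{X^1_1(\fc,\fn)}$ with quotient $\overline{X_1(\fc,\fn)}$, and that the $E$-action lifts canonically to $\overline{\underline{\omega^1}}$ because $\underline{\omega^1}=\det \pi_*\Omega^1$ is functorial in the abelian variety, while the $E$-action $[\epsilon]$ only modifies the polarisation and hence acts trivially on $\pi_*\Omega^1$. Thus $\overline{\underline{\omega^1}}$ carries a canonical $E$-linearisation, and by descent for the finite group quotient $\phi$, together with the fact that $\underline{\omega^1}$ already descends to $\underline{\omega}$ on the open part, one obtains a coherent sheaf $\overline{\underline{\omega}}$ on $\overline{X_1(\fc,\fn)}$ whose pullback is $\overline{\underline{\omega^1}}$ and which restricts to $\underline{\omega}$ over $X_1(\fc,\fn)$. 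Since $\phi$ is finite \'etale and $\overline{\underline{\omega^1}}$ is invertible, $\overline{\underline{\omega}}$ is invertible as well.

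Finally, ampleness of $\overline{\underline{\omega}}$ follows from ampleness of $\overline{\underline{\omega^1}}$ by a standard descent argument: for a finite surjective morphism $\phi$, an invertible sheaf on the target is ample if and only if its pullback is ample (this is part of the theory of ample line bundles under finite morphisms, as in \cite{EGA}). Since the pullback $\overline{\underline{\omega^1}}$ is ample on $\overline{X^1_1(\fc,\fn)}$, the descended sheaf $\overline{\underline{\omega}}$ is ample on $\overline{X_1(\fc,\fn)}$. The one point requiring care throughout is that $\overline{X_1(\fc,\fn)}$ is merely normal and not smooth, and that the boundary is finite; but since all our constructions are compatible with the $\mathrm{Proj}$ description, which builds in the correct behaviour at the cusps, this causes no difficulty.
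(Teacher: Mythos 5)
Your overall skeleton matches the paper's: take the ample extension $\overline{\underline{\omega^1}}$ on $\overline{X^1_1(\fc,\fn)}$ from Chai and Kisin--Lai, produce an extension $\overline{\underline{\omega}}$ of $\underline{\omega}$ on the quotient, and deduce its ampleness from that of $\overline{\underline{\omega^1}}$ using finiteness of $\bar\phi$. The genuine gap is in the middle step, where you construct $\overline{\underline{\omega}}$ by $E$-equivariant descent and justify invertibility of the descended sheaf by the phrase ``since $\phi$ is finite \'etale and $\overline{\underline{\omega^1}}$ is invertible, $\overline{\underline{\omega}}$ is invertible as well.'' Only $\phi$ on the open variety is \'etale; the extended map $\bar\phi:\overline{X^1_1(\fc,\fn)}\to\overline{X_1(\fc,\fn)}$ is finite but not \'etale, and not even flat, precisely at the cusps: as recalled in the paper, the preimage under $\bar\phi$ of each cusp $\infty(\fc;\fb)$ is a \emph{single point}, so $E$ acts there with maximal stabiliser. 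For a finite group acting with fixed points, an $E$-linearised invertible sheaf descends to an \emph{invertible} sheaf on the quotient only if each stabiliser acts trivially on the fibre at its fixed point (Kempf's descent criterion); otherwise the natural candidate, the $E$-invariant pushforward, is merely a coherent reflexive rank-one sheaf, and since the cusps are singular points of the minimal compactification when $d\geq 2$, such a sheaf need not be invertible. Your functoriality argument (``the $E$-action only changes the polarisation, hence acts trivially on $\pi_*\Omega^1$'') does produce the linearisation over the open part, and it even extends to the boundary by normality and codimension $\geq 2$; but it says nothing about the stabiliser action on the fibres \emph{at the cusps}, where there is no abelian variety to which functoriality applies---only a Tate degeneration. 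Verifying this triviality at the cusps, via the unit action on $q$-expansions, is exactly the nontrivial content of the result the paper invokes at this point, namely \cite[Theorem 8.6(vi)]{dimtildg}; it is where parallel weight and the squares in the definition of $E$ enter. As written, your proof assumes the key point.

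The remaining steps are sound in this setting. Your appeal to descent of ampleness along a finite surjective morphism is legitimate here \emph{because} $\overline{X_1(\fc,\fn)}$ is normal: this is the norm argument of \cite{EGA}, II.6.6.2--6.6.3 under condition (IIbis), and it is essentially how the paper concludes. Note, though, that the paper never establishes a global isomorphism $\bar\phi^*\overline{\underline{\omega}}\simeq\overline{\underline{\omega^1}}$ (it only has one over the open part, where $\phi$ is \'etale); it therefore forms the norm $\cN_{\bar\phi}(\overline{\underline{\omega^1}})$, which is ample, and identifies it with $\overline{\underline{\omega}}^{\otimes |E|}$ using normality and the fact that the boundary has codimension $d\geq 2$, whence ampleness of $\overline{\underline{\omega}}$. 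Had your descent step been justified, your route would be slightly shorter, since pullback along $\bar\phi$ of the descended sheaf would literally be $\overline{\underline{\omega^1}}$; but the existence of that descended invertible extension is precisely what must be proved.
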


\begin{proof}
As explained in \cite[\S1.8]{kisin-lai}, the arguments of \cite[\S4.4]{chai} go over unchanged to our setting and prove that the invertible sheaf $\underline{\omega^1}$ on $X^1_1(\fc,\fn)$ extends to an ample  invertible sheaf  $\overline{\underline{\omega^1}}$  on   $\overline{X^1_1(\fc,\fn)}$. 
By \cite[Theorem 8.6(vi)]{dimtildg},  $\underline{\omega}$ extends to an   invertible sheaf 
$\overline{\underline{\omega}}$  on  $\overline{X_1(\fc,\fn)}$. To show that $\overline{\underline{\omega}}$ 
is ample, consider the commutative diagram: 
$$\xymatrix{X^1_1(\fc,\fn)  \ar^{\phi}[d]   \ar[r] &\overline{X^1_1(\fc,\fn)} \ar^{\bar\phi}[d]\\
X_1(\fc,\fn)  \ar[r] &\overline{X_1(\fc,\fn)},}$$
where $\phi: {X^1_1(\fc,\fn)}\to {X_1(\fc,\fn)}$ is finite étale with group $E$, whereas 
  $\bar \phi: \overline{X^1_1(\fc,\fn)}\to \overline{X_1(\fc,\fn)}$ is a finite surjective morphism
of normal schemes. By \cite[II.6.6.2]{EGA} the norm  $\cN_{\bar\phi}(\overline{\underline{\omega^1}})$ is an 
ample invertible sheaf on $\overline{X_1(\fc,\fn)}$ (note that whereas condition (I) of {\em loc.\ cit.} may not hold since $\bar\phi$ is not necessarily flat, condition (IIbis) is always fulfilled since $\overline{X_1(\fc,\fn)}$
is  normal). Moreover one has
$$\cN_{\bar\phi}(\overline{\underline{\omega^1}})_{|X_1(\fc,\fn)}= \cN_{\phi}(\underline{\omega^1})=
\cN_{\phi}(\phi^*{\underline{\omega}})= \underline{\omega}^{\otimes |E|},$$
where the final equality follows from~\cite[II.6.5.2.4]{EGA}.
Since  $\overline{X_1(\fc,\fn)}$ is normal and the complement of $X_1(\fc,\fn)$ has codimension $d\geq 2$, 
one deduces that 
$\cN_{\bar\phi}(\overline{\underline{\omega^1}})=\overline{\underline{\omega}}^{\otimes |E|}$. 
Therefore $\overline{\underline{\omega}}^{\otimes |E|}$ is ample, which implies that 
$\overline{\underline{\omega}}$ is ample too. 
\end{proof}

The complement of $X_1(\fc,\fn)$ in $\overline{X_1(\fc,\fn)}$ consists of closed points, called cusps. 
Amongst these one has the `infinity' cusps  $\infty(\fc;\fb):\Spec(\Z[\frac{1}{\Nm(\fn)}])\to \overline{X_1(\fc,\fn)}$, 
with $\fb$ running over a set of representatives of $\cC\ell_F$, the standard infinity cusp being $\infty(\fc)=\infty(\fc;\fo)$.
By \cite[\S4]{dimdg}  the inverse  image of $\infty(\fc,\fb)$ under $\bar\phi$ in $\overline{X_1^1(\fc,\fn)}$ consists of a unique point
and the Tate object at that cusp is given by  $(\mathbb{G}_m\otimes_\fo \fb^{-1}\fc^{-1}\fd^{-1})/q^{\fb}$. 

Let $R$ be a $\Z[\frac{1}{\Nm(\fn)}]$-algebra. 
The Koecher Principle (see \cite[4.9]{rapoport} and \cite[Theorem 8.3]{dimdg}) implies that the natural restriction map 
\begin{equation}\label{koecher}
\rH^0(\overline{X_1(\fc,\fn)}\times\Spec(R), \overline{\underline{\omega}}^{\otimes k})\overset{\sim}{\longrightarrow}
\rH^0(X_1(\fc,\fn) \times\Spec(R), \underline{\omega}^{\otimes k})=M_k(\fc;\fn;R).
\end{equation}
is an isomorphism. Note that the arguments from {\em loc.\ cit.} are purely at~$\infty$,
hence also apply to $R$ in which the discriminant is not necessarily invertible.

\subsection{Ampleness and lifting}
Fix a prime number $p$, relatively prime to $\fn$. 

The construction of a Galois representation for a normalised Hilbert modular eigenform of  weight $1$ over~$\Fbar_p$ is achieved by constructing a lifting in characteristic $0$ in some, possibly higher,   weight. 
The  existence of such lifts with special properties relative to the Hecke operators $T_\fp$ at 
primes $\fp$ dividing $p$,  namely being ordinary,  is an important ingredient in the proof of our main theorem, allowing us to apply  theorems of Hida and Wiles  describing the local behaviour at $\fp$
of the $p$-adic Galois representations attached to those lifts. 
Contrary to the situation for elliptic modular forms, we are not aware of a precise way to control the weight of the lift.
If the weight is at least~$3$, Stroh and Lan-Suh have proved independently that one can lift cusp forms under the condition that $p$ is at least~$d$  and does not divide $\Nm(\fn\fd)$ (see \cite{lan-suh}).

We now prove the lifting result we need with a method going back to Katz based on Serre's vanishing theorem in coherent cohomology
(compare with  \cite[11.9]{andreatta-goren}). 

\begin{lemma}\label{lem:lift}
There exists an integer $k_0$ depending on $\fn$, such that for all $k>k_0$
there is a natural Hecke equivariant isomorphism: 
$$M_k(\fn ; \Z_p) \otimes \F_p \simeq M_k(\fn ; \F_p).$$
\end{lemma}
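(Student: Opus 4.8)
The plan is to pass to the minimal compactification, where ampleness and Serre's vanishing theorem are available, and to read off the comparison from the long exact cohomology sequence attached to multiplication by~$p$. First I would invoke the Koecher Principle~\eqref{koecher}, which holds for any $\Z[\frac{1}{\Nm(\fn)}]$-algebra, and in particular for $R=\Z_p$ and $R=\F_p$ (both are such algebras since $p\nmid\Nm(\fn)$). This identifies $M_k(\fn;R)$ with $\rH^0(\overline{X_1(\fn)}\times\Spec(R),\overline{\underline{\omega}}^{\otimes k})$, so that the assertion becomes a comparison of global sections of the ample invertible sheaf $\overline{\underline{\omega}}^{\otimes k}$ on the proper, flat $\Z[\frac{1}{\Nm(\fn)}]$-scheme $\overline{X_1(\fn)}$ over the two base rings. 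Passing to the compactification is essential here: Serre vanishing requires projectivity, which the open variety $X_1(\fn)$ does not enjoy.

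Next I would write down the short exact sequence of sheaves on $\overline{X_1(\fn)}\times\Spec(\Z_p)$ given by multiplication by~$p$,
\[
0\to \overline{\underline{\omega}}^{\otimes k}\xrightarrow{\;p\;} \overline{\underline{\omega}}^{\otimes k}\to \overline{\underline{\omega}}^{\otimes k}\big|_{\overline{X_1(\fn)}\times\Spec(\F_p)}\to 0,
\]
which is exact because $\overline{X_1(\fn)}$ is flat over $\Z_p$ and $\overline{\underline{\omega}}^{\otimes k}$ is invertible, so $p$ acts injectively and the cokernel is the restriction to the special fibre. The associated long exact cohomology sequence exhibits the canonical map $M_k(\fn;\Z_p)\otimes\F_p\to M_k(\fn;\F_p)$ as the injection $\rH^0(\overline{\underline{\omega}}^{\otimes k})/p\hookrightarrow \rH^0(\overline{\underline{\omega}}^{\otimes k}|_{\F_p})$ whose cokernel is naturally identified with the $p$-torsion subgroup of $\rH^1(\overline{X_1(\fn)}\times\Spec(\Z_p),\overline{\underline{\omega}}^{\otimes k})$. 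Thus the map is always injective, and it suffices to show that this $\rH^1$ is $p$-torsion free for $k$ large; in fact I will make it vanish outright.

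Finally I would apply Serre's vanishing theorem. By the preceding Lemma, $\overline{\underline{\omega}}$ is ample on the projective $\Z[\frac{1}{\Nm(\fn)}]$-scheme $\overline{X_1(\fn)}$, so there exists an integer $k_0$ --- depending only on $\overline{X_1(\fn)}$ and $\overline{\underline{\omega}}$, hence only on~$\fn$ --- such that $\rH^i(\overline{X_1(\fn)},\overline{\underline{\omega}}^{\otimes k})=0$ for all $i\geq 1$ and all $k>k_0$. Since $\Z_p$ is flat over $\Z[\frac{1}{\Nm(\fn)}]$, cohomology commutes with this base change, so $\rH^1(\overline{X_1(\fn)}\times\Spec(\Z_p),\overline{\underline{\omega}}^{\otimes k})=0$ for $k>k_0$, and by the previous step the comparison map is an isomorphism. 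Hecke equivariance is automatic, as the isomorphism is the canonical base-change map along $\Z_p\to\F_p$, which is compatible with the correspondences and diamond operators defining the Hecke action.

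I expect the genuinely delicate point to lie not in this cohomological argument but in the inputs it relies upon: the ampleness of $\overline{\underline{\omega}}$ on the minimal compactification, established in the preceding Lemma via the norm construction, together with the Koecher Principle that lets one replace the open variety by its compactification without changing $\rH^0$. The one bookkeeping point requiring care is to run Serre vanishing over $\Z[\frac{1}{\Nm(\fn)}]$ \emph{before} base change, so that $k_0$ comes out independent of~$p$.
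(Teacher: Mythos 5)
Your proposal is correct and follows essentially the same route as the paper: Koecher's principle to pass to the minimal compactification, the multiplication-by-$p$ short exact sequence, and Serre's vanishing theorem applied to the ample sheaf $\overline{\underline{\omega}}$ from the preceding lemma. The one small improvement is that you invoke Serre vanishing over $\Z[\frac{1}{\Nm(\fn)}]$ and only then base-change along the flat map $\Z[\frac{1}{\Nm(\fn)}]\to\Z_p$, which makes $k_0$ genuinely independent of~$p$ as the statement asserts, whereas the paper applies Serre's theorem directly to $\overline{X_1(\fc,\fn)}\times\Spec(\Z_p)$, giving a priori a $k_0$ that could depend on~$p$ as well.
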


\begin{proof} 
Fix a fractional ideal $\fc$ and let $X=X_1(\fc,\fn)\times\Spec(\Z_p)$. 
The projective variety  $\overline{X}= \overline{X_1(\fc,\fn)}\times\Spec(\Z_p)$ is normal, hence flat over $\Z_p$. 
Since $\overline{\underline{\omega}}^{\otimes k}$ is a locally free  $\cO_{\overline{X}}$-module, one gets a short exact sequence of sheaves on $\overline{X}$:
$$0\to \overline{\underline{\omega}}^{\otimes k} \overset{p\cdot}{\longrightarrow}
\overline{\underline{\omega}}^{\otimes k} \to 
\overline{\underline{\omega}}^{\otimes k}_{\F_p} \to 0,$$
yielding, using Koecher's principle \eqref{koecher}, a long exact sequence in cohomology: 
$$0\to M_k(\fc, \fn ; \Z_p)\overset{p\cdot}{\longrightarrow} 
M_k(\fc, \fn ; \Z_p) \to \rH^0(\overline{X}, \overline{\underline{\omega}}^{\otimes k}_{\F_p})
\to \rH^1(\overline{X}, \overline{\underline{\omega}}^{\otimes k}). $$

Since $\overline{\underline{\omega}}^{\otimes k}_{\F_p}$ has its support in 
$\overline{X}\times\Spec(\F_p)$, and using again \eqref{koecher} one gets
$$\rH^0(\overline{X}, \overline{\underline{\omega}}^{\otimes k}_{\F_p})=
\rH^0(\overline{X}\times \Spec(\F_p), \overline{\underline{\omega}}^{\otimes k}_{\F_p})=
M_k(\fc, \fn ; \F_p).$$

Finally by a theorem due to Serre \cite[III.2.2.1]{EGA}, since $\overline{X}$ is projective over $\Z_p$ and 
$\overline{\underline{\omega}}$ is an ample invertible sheaf, there exists an integer $k_0$, such that $\rH^1(\overline{X}, \overline{\underline{\omega}}^{\otimes k})$ vanishes for all $k>k_0$.  
Letting $\fc$ run over  the fixed set of representatives of $\cC\ell_F^+$ yields the desired result. 
\end{proof}

\subsection{Geometric $q$-expansion}\label{q-exp}
Let $R$ be a $\Z[\frac{1}{\Nm(\fn)}]$-algebra. 
By \cite[Theorem 8.6]{dimdg}  the local completed ring of $\overline{X_1(\fc,\fn)}\times\Spec(R)$ along~$\infty(\fc)$ is given by
\begin{equation}
M_\infty(\fc;R):=\left\{\sum_{\xi\in\fc_+\cup\{0\}} a_\xi q^\xi \;|\; a_\xi\in R  \text{ and }  a_{\epsilon\xi}= a_\xi,  \forall \epsilon \in  \fo_+^\times\right\}.
\end{equation}
Furthermore by {\em loc.\ cit.} the local completed ring of $\overline{X_1(\fc,\fn)}\times\Spec(R)$ along~$\infty(\fc;\fb)$ is given by $M_\infty(\fc\fb^2;R)$.

Since  $\overline{\underline{\omega}}$ can be trivialised over the formal neighbourhood 
$\mathcal{X}(\fc):=\Spf(M_\infty(\fc;R))$ of $\infty(\fc)$,  (\ref{koecher}) yields the {\em geometric $q$-expansion}  map
\begin{equation}
M_k(\fc,\fn; R) \to M_\infty(\fc;R), \enspace f_\fc \mapsto \sum_{\xi\in\fc_+\cup\{0\}} a_\xi(f) q^\xi,
\end{equation}
which due to its local definition factors through the map 
\begin{equation}\label{eq:q-exp}
\rH^0(U, \overline{\underline{\omega}}^{\otimes k}) \to M_\infty(\fc;R),
  \enspace f \mapsto f_\fc =\sum_{\xi\in\fc_+\cup\{0\}} a_\xi(f_\fc) q^\xi,
\end{equation}
for any  open subscheme $U\to \Spec(R)$ of $\overline{X_1(\fc,\fn)}\times\Spec(R)$ containing $\infty(\fc)$.

\begin{prop}[$q$-expansion principle]\label{prop:q-exp}
The map \eqref{eq:q-exp} is injective.
Moreover, for any $\Z[\frac{1}{\Nm(\fn)}]$-subalgebra $R'\subset R$ and
any  open subscheme $U\to \Spec(R')$ of $\overline{X_1(\fc,\fn)}$ containing $\infty(\fc)$, one has 
$$ \rH^0(U \times_{\Spec(R')}\Spec(R), \overline{\underline{\omega}}^{\otimes k}) \cap M_\infty(\fc;R')=
   \rH^0(U, \overline{\underline{\omega}}^{\otimes k}).$$ 
In particular $ M_k(\fc,\fn;R)\cap M_\infty(\fc;R')=M_k(\fc,\fn; R')$. 
\end{prop}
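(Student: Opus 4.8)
The plan is to establish the three assertions in turn, with the injectivity of the $q$-expansion map carrying the essential geometric content that makes the descent statement work.

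First I would prove injectivity of \eqref{eq:q-exp}. Since $\overline{\underline{\omega}}$ is trivialised over the formal neighbourhood $\mathcal{X}(\fc)=\Spf(M_\infty(\fc;R))$ of $\infty(\fc)$, the $q$-expansion of a section $f\in \rH^0(U,\overline{\underline{\omega}}^{\otimes k})$ is exactly the image of $f$ in the completed local ring $\widehat{\cO}_{\overline{X_1(\fc,\fn)}\times\Spec(R),\,\infty(\fc)}$. If that image vanishes, then after reducing to the case of a Noetherian $R$ (any single section is defined over a finitely generated $\Z[\tfrac{1}{\Nm(\fn)}]$-subalgebra of $R$, and $\rH^0$ commutes with filtered colimits on the quasi-compact quasi-separated scheme $U$), the injectivity of $\cO\to\widehat{\cO}$ for Noetherian local rings shows that $f$ vanishes on a Zariski-open neighbourhood of $\infty(\fc)$. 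The geometric connectedness and normality of $X_1(\fc,\fn)$ over $\Z[\tfrac{1}{\Nm(\fn)}]$ make the fibres of $\overline{X_1(\fc,\fn)}$ geometrically integral; since $\overline{\underline{\omega}}^{\otimes k}$ is invertible and hence torsion-free, a section vanishing near $\infty(\fc)$ vanishes on the whole integral fibre through $\infty(\fc)$, and a short induction along a filtration of $R$ with reduced quotients (using the flatness of $\overline{X_1(\fc,\fn)}$ over the base to pass between the layers) upgrades this fibrewise vanishing to $f=0$.

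For the descent identity the inclusion $\rH^0(U,\overline{\underline{\omega}}^{\otimes k})\subseteq \rH^0(U\times_{\Spec(R')}\Spec(R),\overline{\underline{\omega}}^{\otimes k})\cap M_\infty(\fc;R')$ inside $M_\infty(\fc;R)$ is immediate, as an $R'$-section base-changes to an $R$-section with the same $R'$-rational $q$-expansion; the content is the reverse inclusion. The heart of the matter is the case in which $R$ is free over $R'$ with $1$ part of a basis $\{e_i\}_{i\in I}$, $e_0=1$ (this already covers the field extensions, such as $\F_p\subseteq\Fbar_p$, that occur in the applications). Since $\overline{\underline{\omega}}^{\otimes k}$ is flat and $\rH^0$ commutes with direct sums on the quasi-compact quasi-separated $U$, one has
\begin{equation*}
\rH^0(U\times_{\Spec(R')}\Spec(R),\overline{\underline{\omega}}^{\otimes k})=\bigoplus_{i\in I}\rH^0(U,\overline{\underline{\omega}}^{\otimes k})\,e_i,
\end{equation*}
compatibly with the decomposition $M_\infty(\fc;R)=\bigoplus_{i}M_\infty(\fc;R')\,e_i$ of $q$-expansions. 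Writing $g=\sum_i g_i e_i$ with $g_i\in\rH^0(U,\overline{\underline{\omega}}^{\otimes k})$, the hypothesis that the $q$-expansion of $g$ lies in $M_\infty(\fc;R')=M_\infty(\fc;R')\,e_0$ forces the $q$-expansion of each $g_i$ with $i\neq 0$ to vanish, whence $g_i=0$ for $i\neq 0$ by the injectivity already proved over $R'$. Thus $g=g_0$ descends, as required.

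The remaining, non-free flat inclusions (the model being $\Z\subseteq\Q$) I would treat by combining flat base change for $\rH^0$ with the injectivity over quotient rings: writing $g=f/a$ with $f\in\rH^0(U,\overline{\underline{\omega}}^{\otimes k})$ over $R'$ and the hypothesis forcing the $q$-expansion of $f$ into $aM_\infty(\fc;R')$, the short exact sequence $0\to\overline{\underline{\omega}}^{\otimes k}\xrightarrow{a}\overline{\underline{\omega}}^{\otimes k}\to\overline{\underline{\omega}}^{\otimes k}/a\to 0$ (exactly as in the proof of Lemma~\ref{lem:lift}) together with injectivity over $R'/a$ yields $f\in a\rH^0(U,\overline{\underline{\omega}}^{\otimes k})$, so that $g$ descends. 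The main obstacle is precisely the non-flatness of a general subalgebra inclusion $R'\subseteq R$, where flat base change is unavailable; it is circumvented by reducing to a finitely generated, and then flat, extension and letting the injectivity of the $q$-expansion play the role that flatness would otherwise play. Finally, the last assertion is the special case $U=\overline{X_1(\fc,\fn)}$: Koecher's principle \eqref{koecher} identifies $\rH^0(\overline{X_1(\fc,\fn)}\times\Spec(R),\overline{\underline{\omega}}^{\otimes k})$ with $M_k(\fc,\fn;R)$, and likewise over $R'$, so the displayed identity specialises to $M_k(\fc,\fn;R)\cap M_\infty(\fc;R')=M_k(\fc,\fn;R')$.
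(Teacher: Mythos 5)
Your injectivity argument is, in substance, the one the paper appeals to (the paper deduces irreducibility of the fibres from their normality via Rapoport's Theorem 6.1 and then cites the arguments of Rapoport's Theorem 6.7, which go back to Katz): reduction to Noetherian coefficients, Krull intersection at the points of the cusp section, then irreducibility of the fibres plus a d\'evissage. One correction is needed, though: the step ``upgrades this fibrewise vanishing to $f=0$'' cannot work as stated, because fibrewise vanishing does \emph{not} imply vanishing in a flat family over a non-reduced base (the section $\epsilon t$ of $\cO$ on $\A^1_{k[\epsilon]/(\epsilon^2)}$ vanishes on the unique fibre yet is nonzero), so no induction can start from that hypothesis alone. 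What must be propagated through the filtration $0=M_0\subset\cdots\subset M_n=R$ with $M_i/M_{i-1}\simeq R/\fp_i$ is the stronger fact you established one sentence earlier: $f$ vanishes on a Zariski open $V$ containing the section $\infty(\fc)$. Each layer $U\times_{\Spec(R)}\Spec(R/\fp_i)$ is integral (flat over the domain $R/\fp_i$ with integral fibres) and meets $V$ in a dense open, so the image of $f$ in $\rH^0(U,\overline{\underline{\omega}}^{\otimes k}\otimes_R(M_i/M_{i-1}))$ vanishes, and one descends the filtration; with this fix the first half is sound.

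The genuine gap is in the descent statement. The two cases you actually prove --- $R$ free over $R'$ with $1$ part of a basis, and $R$ a localisation of $R'$ --- do not cover a general subalgebra inclusion, and the proposed remedy, ``reducing to a finitely generated, and then flat, extension'', is not a reduction at all: a finitely generated ring extension need not be flat (e.g.\ $\Z\subset\Z[x]/(2x)$), and an arbitrary inclusion $R'\subset R$ cannot be devissaged into flat ones. The missing idea, which is how the cited proof proceeds, is to prove injectivity of the $q$-expansion map with coefficients in an arbitrary $R'$-\emph{module} $M$, i.e.\ injectivity of $\rH^0(U,\overline{\underline{\omega}}^{\otimes k}\otimes_{R'}M)\to M_\infty(\fc;M)$; your own d\'evissage gives this (filtered colimits reduce to finitely generated $M$, a filtration with quotients $R'/\fp$ reduces to the integral case). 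Granting that, descent requires no flatness of $R$ over $R'$ whatsoever: $U$ \emph{is} flat over $R'$ (by flatness of $\overline{X_1(\fc,\fn)}$ over $\Z[\frac{1}{\Nm(\fn)}]$), so tensoring $0\to R'\to R\to R/R'\to 0$ with $\overline{\underline{\omega}}^{\otimes k}$ yields an exact sequence of sheaves on $U$, and a section $g$ over $R$ whose $q$-expansion lies in $M_\infty(\fc;R')$ has image in $\rH^0(U,\overline{\underline{\omega}}^{\otimes k}\otimes_{R'}(R/R'))$ with zero $q$-expansion (the functor $M_\infty(\fc;-)$ is exact in the module variable, being built from products), hence zero image, hence $g\in\rH^0(U,\overline{\underline{\omega}}^{\otimes k})$. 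Incidentally, your claimed identity $M_\infty(\fc;R)=\bigoplus_i M_\infty(\fc;R')e_i$ is false for an infinite basis, since power series do not commute with infinite direct sums; your free-case argument survives only because any given section has finitely many nonzero components.
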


\begin{proof} Since the geometric fibres of $X_1(\fc,\fn)\to \Spec(\Z[\frac{1}{\Nm(\fn)}])$ are normal,  the argument of  \cite[Theorem 6.1]{rapoport} implies that they are  irreducible. Hence the geometric fibres of $U\to \Spec(\Z[\frac{1}{\Nm(\fn)}])$ are irreducible too and the above claims
follow by exactly the same arguments as in \cite[Theorem 6.7]{rapoport}. 
\end{proof}

\subsection{Adelic $q$-expansion}\label{adelic-q-exp}

Let $f\in M_k(\fn;R)$. By Proposition \ref{prop:q-exp}, $f$  is determined by its $q$-expansions $f_\fc\in  M_\infty(\fc;R)$ at $\infty(\fc)$, where $\fc$ runs over the  fixed set of representatives of $\cC\ell_F^+$.
We now define the adelic $q$-expansion of $f$, following \cite[\S2]{shimura} (see also \cite{hida-PAF}).
Let
$$M(R):=  R[\cC\ell_F^+] \bigoplus\left\{\sum_{\fb \in \cI} a(\fb) q^\fb \;|\; a(\fb) \in R\right\},$$
where $\cI$ is the group of fractional ideals of~$F$.
For any fractional ideal~$\fc$, consider the adelic expansion map
$$\psi_\fc: M_\infty(\fc;R) \to M(R)$$ 
sending $\sum_{\xi\in\fc_+\cup\{0\}} a_\xi q^\xi$ to the element of $\sum_{\fb \in \cI\cup\{(0)\}} a(\fb) q^\fb \in M(R)$ such that 
$$ a(\fb) = \begin{cases}
a_\xi     & \textnormal{ if } \fb \fc = (\xi) \textnormal{ for some }\xi \in \fc_+,\\
a_0[\fc]  & \textnormal{ if }\fb = (0),\\
0            & \textnormal{ otherwise,}
\end{cases}$$
where the first alternative means that $\fb\fc$ is the trivial element in $\cC\ell_F^+$ so that $\fb \fc$ is
principal with some totally positive generator~$\xi$, uniquely determined up to totally positive units.
The invariance property of $M_\infty(\fc;R)$ implies that this map does not depend on the choice of the totally positive generator $\xi$ of the ideal $\fb \fc$.
This hints to us why taking $E$-invariants is necessary in order to have a good theory of Hecke operators on 
$q$-expansions.

Let $\xi \in F_+^\times$ so that $\fc$ and $\fc'=\xi\fc $ are in the same narrow ideal class. Then
$$ \psi_\xi: M_\infty(\fc;R) \to M_\infty(\fc';R), \;\;\; \sum_{\xi'\in\fc_+\cup\{0\}} a_{\xi'} q^{\xi'} \mapsto \sum_{\xi'\in\fc_+\cup\{0\}} a_{\xi'} q^{\xi\xi'}$$
is an isomorphism of $R$-algebras and we have
$\psi_{\fc'} \circ \psi_\xi = \psi_\fc$.

Adding up the maps $\psi_\fc$, gives a well-defined map, the adelic $q$-expansion:
$$ \psi: \bigoplus_{\fc \in \cC \ell_F^+} M_\infty(\fc;R) \to M(R),$$
where $\fc$ runs over  the fixed set of representatives of the narrow ideal class group.
The map $\psi$ is independent of the choice of these representatives in the above sense.

Note that while  any $f\in M_k(\fn;R)$ is determined by its adelic $q$-expansion,
the latter does not have a direct geometric interpretation if $F\neq \Q$. 

Let $\fc$ be a fractional ideal and $\fq $ be an integral ideal. It is immediate to check that the
inclusion $\iota_\fq : M_\infty(\fc \fq ; R) \hookrightarrow M_\infty(\fc; R)$ corresponds to the
$\fq $-power map on adelic $q$-expansions, {\it i.e.,}\ $a(\fb, f) = a(\fb\fq , \iota_\fq (f))$ for all integral ideals~$\fb$.
Furthermore,  the normalised trace map
$ t_\fq : M_\infty(\fc; R) \to M_\infty(\fc\fq ; R)$
given by keeping only those terms which are indexed by elements of $\fc\fq$, corresponds to the $1/\fq $-power map on adelic $q$-expansions, {\it i.e.,}\ $a(\fb\fq , f) = a(\fb, t_\fq (f))$ for all integral ideals~$\fb$.

\section{Hecke operators}\label{operators}

In this section we provide the operators on geometric Hilbert modular forms that are needed to prove the main theorem.
In particular, we recall the constructions of the diamond operators and of Hecke operators of index coprime to the level and the characteristic.
The core of this section is the definition of the Hecke operator $T_\fp$ for $\fp \mid p$ and the computation of its
effect on adelic $q$-expansions.
It is also the main input in our definition of Frobenius operators.

\subsection{Diamond operators}\label{diamond}

Let $\fq$ be any ideal of $\fo$ relatively prime to $\fn$.
Recall that by definition one has an exact sequence 
$$ 0\to (A\otimes_\fo\fq)[\fq] \to A\otimes_\fo\fq \overset{i}{\to}  A\to 0,$$
yielding, since  by  \cite[\S1.9]{kisin-lai} the Cartier dual of $A^\vee[\fq]$ is $ (A\otimes_\fo\fq)[\fq]$, 
$$ 0\to A^\vee[\fq] \to A^\vee   \overset{i^\vee}{\to}   (A\otimes_\fo\fq)^\vee \to 0.$$
It results from these exact sequences that there is a canonical isomorphism
\begin{equation}\label{cartier}
 A^\vee\otimes_\fo\fq^{-1}\overset{\sim}{\longrightarrow}  (A\otimes_\fo\fq)^\vee
\end{equation}

Consider the automorphism $\langle\fq\rangle$ of $X^1_1(\fn)$ sending $(A,\lambda,\mu)$
to $(A\otimes_\fo \fq,\lambda',\mu')$, where $\mu'$ is the composite of $\mu$ with the inverse of the canonical 
isomorphism $(A\otimes_\fo\fq)[\fn]\simeq A[\fn]$ induced by $ A\otimes_\fo\fq \to A$, and $\lambda'$ is the following
$\fc\fq^{-2}$-polarisation on $A\otimes_\fo\fq$: 
$$\lambda': (A\otimes_\fo\fq)\otimes_\fo\fc\fq^{-2}\underset{\sim}{\overset{\lambda\otimes \fq^{-1}}{\longrightarrow}}
A^\vee\otimes_\fo\fq^{-1}\underset{\sim}{\overset{\eqref{cartier}}{\longrightarrow}} (A\otimes_\fo\fq)^\vee.$$

Since $\langle\fq\rangle$ commutes with the $E$-action, it induces an automorphism of  $X_1(\fn)$ extending uniquely 
to an automorphism of $\overline{X_1(\fn)}$. 
For every $\Z[\frac{1}{\Nm(\fn)}]$-algebra~$R$ the resulting  $R$-linear endomorphism $\langle\fq\rangle$ 
of $M_k(\fn; R)$  sending $M_k(\fc\fq^{-1},\fn; R)$ to $M_k(\fc\fq,\fn; R)$ is called 
the diamond operator (note that this operator is the inverse of the one defined by Hida in \cite[4.1.9]{hida-PAF}).

\begin{lemma}\label{formal-diamond}
The morphism $\langle\fq\rangle$  sends the cusp $\infty(\fc\fq,\fq^{-1})$  to $\infty(\fc\fq^{-1})$
and identifies the formal completed rings  at those cusps. 
In particular, the  geometric $q$-expansion of any  $f\in M_k(\fc\fq,\fn; R)$  at  $\infty(\fc\fq,\fq^{-1})$ 
equals $(\langle\fq^{-1}\rangle f)_{\fc\fq^{-1}}$. 
\end{lemma}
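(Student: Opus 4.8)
The plan is to read off the effect of the automorphism $\langle\fq\rangle$ on the degenerating objects near the two cusps, using that $\langle\fq\rangle$ is defined functorially by $(A,\lambda,\mu)\mapsto(A\otimes_\fo\fq,\lambda',\mu')$ and sends the component of polarisation module $\fc\fq$ to that of $\fc\fq^{-1}$. Note first that both cusps already have the \emph{same} completed local ring: by the rule that the completed ring at $\infty(\fc';\fb)$ is $M_\infty(\fc'\fb^2;R)$, the source cusp $\infty(\fc\fq;\fq^{-1})$ gives $M_\infty(\fc\fq\cdot\fq^{-2};R)=M_\infty(\fc\fq^{-1};R)$, which is also the completed ring at the standard cusp $\infty(\fc\fq^{-1})=\infty(\fc\fq^{-1};\fo)$. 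So the real content is to identify the image cusp and then to check that $\langle\fq\rangle$ induces the canonical identity on $M_\infty(\fc\fq^{-1};R)$, compatibly with the $q$-expansion map \eqref{eq:q-exp}.

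First I would identify the image cusp by a direct computation on Tate objects. The Tate object at $\infty(\fc\fq;\fq^{-1})$ is $A=(\mathbb{G}_m\otimes_\fo\fc^{-1}\fd^{-1})/q^{\fq^{-1}}$. Since $\fq$ is an invertible $\fo$-module, $-\otimes_\fo\fq$ is exact, so applying it to the quotient presentation $0\to q^{\fq^{-1}}\to\mathbb{G}_m\otimes_\fo\fc^{-1}\fd^{-1}\to A\to 0$ scales both the torus lattice $\fc^{-1}\fd^{-1}$ and the period lattice $\fq^{-1}$ by $\fq$. This yields $A\otimes_\fo\fq\simeq(\mathbb{G}_m\otimes_\fo\fc^{-1}\fq\fd^{-1})/q^{\fo}$, which is precisely the Tate object of the standard cusp $\infty(\fc\fq^{-1})$: its torus lattice is $(\fc\fq^{-1})^{-1}\fd^{-1}$ and its period lattice is $\fo$. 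I would then check that the transported polarisation $\lambda'$ and level structure $\mu'$ match the standard ones on the target Tate object; for $\mu'$ this is immediate from the canonical identification $(A\otimes_\fo\fq)[\fn]\simeq A[\fn]$ (which is available as $\fq$ is prime to $\fn$), and $\lambda'$ is by construction the $\fc\fq^{-1}$-polarisation. This proves $\langle\fq\rangle(\infty(\fc\fq;\fq^{-1}))=\infty(\fc\fq^{-1})$.

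Being an automorphism, $\langle\fq\rangle$ induces an isomorphism of the two completed rings, and the computation above shows it is the identity on $M_\infty(\fc\fq^{-1};R)$: the twist $\otimes_\fo\fq$ matches the torus lattices so that $q^\xi\mapsto q^\xi$ for $\xi\in(\fc\fq^{-1})_+\cup\{0\}$. To pass from functions to weight $k$, I would verify that $\langle\fq\rangle$ carries the canonical trivialisation of $\overline{\underline{\omega}}$ at $\infty(\fc\fq^{-1})$ to the canonical one at $\infty(\fc\fq;\fq^{-1})$. This should follow because the isomorphism $A\otimes_\fo\fq\simeq(\ldots)$ is $\fo$-linear and sends the canonical invariant differential of the source torus to that of the target torus; the delicate point is to confirm that no normalisation factor (in particular no power of $\Nm(\fq)$) is introduced, so that the diamond operator genuinely only relabels $q$-expansion coefficients, in contrast with $T_\fp$.

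The final assertion is then formal. By definition $\langle\fq^{-1}\rangle$ acts on forms as pullback along the inverse automorphism $\overline{X_1(\fc\fq^{-1},\fn)}\to\overline{X_1(\fc\fq,\fn)}$, which by the above carries $\infty(\fc\fq^{-1})$ to $\infty(\fc\fq;\fq^{-1})$ and identifies the completed rings compatibly with the trivialisations of $\overline{\underline{\omega}}$. Hence evaluating the standard $q$-expansion of $\langle\fq^{-1}\rangle f$ at $\infty(\fc\fq^{-1})$ returns exactly the image of $f$ in $M_\infty(\fc\fq^{-1};R)$ at $\infty(\fc\fq;\fq^{-1})$, i.e.\ $(\langle\fq^{-1}\rangle f)_{\fc\fq^{-1}}$ is the $q$-expansion of $f$ at $\infty(\fc\fq;\fq^{-1})$. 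I expect the main obstacle to be the careful bookkeeping of the torus and period lattices under $\otimes_\fo\fq$ together with the verification that the canonical differentials correspond without a scalar, since this is exactly what guarantees the clean, factor-free relabelling asserted by the lemma.
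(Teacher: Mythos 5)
Your proposal is correct and follows essentially the same route as the paper: identify the image cusp by computing the effect of $-\otimes_\fo\fq$ on the Tate object (the paper does the equivalent computation with $\langle\fq^{-1}\rangle$ applied at $\infty(\fc\fq^{-1})$, writing $A\otimes_\fo\fq^{-1}\simeq A/A[\fq]$), and then observe that both cusps have completed local ring $M_\infty(\fc\fq^{-1};R)$. Your extra care about the trivialisation of $\overline{\underline{\omega}}$ and the absence of a normalisation factor is a point the paper leaves implicit, but it does not change the argument.
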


\begin{proof}
The Tate object at $\bar\phi^{-1}(\infty(\fc\fq^{-1}))$ is $A=(\mathbb{G}_m\otimes_\fo \fc^{-1}\fd^{-1}\fq)/q^{\fo}$
and its image under $\langle\fq^{-1}\rangle$  equals  $A\otimes_\fo \fq^{-1}\simeq A/A[\fq]\simeq
(\mathbb{G}_m\otimes_\fo \fc^{-1}\fd^{-1})/q^{\fq^{-1}}$, hence $\langle\fq^{-1}\rangle$ sends  $\infty(\fc\fq^{-1})$ to 
$\infty(\fc\fq,\fq^{-1})$. Note that the formal completed rings at  both cusps are given by
$M_\infty(\fc\fq^{-1}, R)$,  hence the claim. 
\end{proof}

It can be checked that the diamond operators induce an action of the ray class group of modulus~$\fn$ on $M_k(\fn;R)$.
Given any $R$-valued finite order Hecke character $\varepsilon$ of $F$ of conductor dividing $\fn$, we  let  $M_k(\fn,\varepsilon; R)$ denote the sub-$R$-module
of $M_k(\fn; R)$ of forms on which $\langle\fq\rangle$ acts as $\varepsilon(\fq)$ for all ideals $\fq$ relatively prime to~$\fn$.

\subsection{Hecke operators away from the characteristic}\label{sec:Tq}

Let $\fq$ be a prime of $F$ which is relatively prime to $\fn$, and let  $R$ be a $\Z[\frac{1}{\Nm(\fn\fq)}]$-algebra. 
The assumption that $\fq$ be invertible in~$R$ is removed in the next section when we construct the operators $T_\fp$ for $\fp\mid p$
in characteristic~$p$. 

Let $X=X_1(\fn)\times\Spec(R)$ and denote by $\overline{X}$ its minimal compactification, which is a
normal projective scheme over $R$. One can define as in \S\ref{hmv}  a Hilbert modular variety 
$Y$ of level $\Gamma_1(\fn)\cap\Gamma_0(\fq)$  over  $R$ endowed with 
two finite surjective morphisms  $\pi_1,\pi_2: Y \rightrightarrows X$, corresponding to the forgetful and the `quotient' maps
on the level of moduli spaces, respectively. 
A careful inspection of the moduli theoretic definition of $\pi_1$ and $\pi_2$ (see  \cite[\S1.10]{kisin-lai})  shows that  they  factor through one another. Therefore their extensions  to the minimal compactifications  fit in a  commutative diagram: 
\begin{equation}\label{atkin-lehner}
\xymatrix{\overline{Y} \ar_{\pi_1}[rd]  \ar[rr]^{\iota}_{\sim}&  &\overline{Y}\ar^{\pi_2}[ld]\\   & \overline{X} & },
\end{equation}
where $\iota$ is an automorphism, but not necessarily an involution,   of  $\overline{Y}$.

The Hecke operator $T_{\fq}^{(k)\vee}$ considered in \cite[\S2.4]{dim-hmv} is the $R$-linear endomorphism 
of $M_k(\fn; R)$ defined as the composite of the following three maps: 

\begin{enumerate}
\item  the inclusion coming from the adjunction morphism:
$$\rH^0(\overline{X}, \overline{\underline{\omega}}^{\otimes k})\to 
\rH^0(\overline{X}, \pi_{2*}\pi_2^*\overline{\underline{\omega}}^{\otimes k})= 
\rH^0(\overline{Y}, \pi_2^*\overline{\underline{\omega}}^{\otimes k});$$
\item the morphism 
\begin{equation}\label{connecting-morphism}
\rH^0(\overline{Y}, \pi_2^*\overline{\underline{\omega}}^{\otimes k})\to 
\rH^0(\overline{Y}, \pi_1^*\overline{\underline{\omega}}^{\otimes k})
\end{equation}
 induced by a morphisms of sheaves
$\pi_2^*\overline{\underline{\omega}}^{\otimes k}\to \pi_1^*\overline{\underline{\omega}}^{\otimes k}$ (see \cite[(1.11.1)]{kisin-lai});
\item the morphism
$\rH^0(\overline{Y}, \pi_1^*\overline{\underline{\omega}}^{\otimes k})= \rH^0(\overline{X},\pi_{1*} \pi_1^*\overline{\underline{\omega}}^{\otimes k})\to \rH^0(\overline{X}, \overline{\underline{\omega}}^{\otimes k})$
induced by the trace map  $\pi_{1*} \pi_1^*\overline{\underline{\omega}}^{\otimes k}\to \overline{\underline{\omega}}^{\otimes k}$ relative to the finite morphism $\pi_1$, followed by the multiplication by $\Nm(\fq)^{-1}\in R$.
\end{enumerate}
Henceforth  we set  $T_{\fq}^{(k)}=T_{\fq}^{(k)\vee}\circ \langle\fq\rangle$ (this is the normalisation used by Gross  \cite{gross}).

When $R$ has characteristic $0$, comparison with the complex analytic theory of automorphic forms for $\GL_2(F)$ implies that for all $f\in M_k(\fn;R)$ and for all non-zero ideals $\fr\subset \fo$ one has (see~\cite{shimura}): 
\begin{equation}\label{Tq}
 a(\fr, T_\fq^{(k)} f) = a(\fq \fr,f) + \Nm(\fq)^{k-1} a(\fr/\fq,\langle\fq\rangle f)
\end{equation}
where as usual $a(\fr/\fq,f)=0$ if $\fq \nmid \fr$. When $\fr=(0)$ the following equality holds in $R[\cC\ell_F^+]$:
\begin{equation}\label{eq:Tq0}
 a((0), T_\fq^{(k)} f) = a((0), f)[\fq] + \Nm(\fq)^{k-1}a((0), \langle\fq\rangle f)[\fq^{-1}].
\end{equation}
In fact, since \eqref{eq:Tq0}  is clearly satisfied by cusp forms, it suffices to check it on the  Eisenstein subspace which has a  basis consisting of Eisenstein series attached to pairs of 
Hecke character  $\varphi$ and $\varepsilon\varphi^{-1}$, where $\varepsilon$  is the central character. Moreover  the constant term at the~$\infty$ cusps vanishes unless  $\varphi$ is a character of $\cC\ell_F^+$ (see \cite[\S 2.2]{DDP}), in which case one has
\begin{equation*}
 a((0), T_\fq^{(k)} f) =(\varphi(\fq) + \varepsilon(\fq) \varphi^{-1}(\fq))a((0), f)=
 a((0), f)[\fq] + \varepsilon(\fq)\Nm(\fq)^{k-1}a((0), f)[\fq^{-1}].
\end{equation*}

Together with the $q$-expansion principle,   formulas  \eqref{Tq} and \eqref{eq:Tq0}
imply that the $T_\fq^{(k)}$s commute with each other. Moreover if $R$ is a 
characteristic $0$ field then   $T_\fq^{(k)}$ is semi-simple.

Formulas \eqref{Tq} and \eqref{eq:Tq0} remain valid when $R$ has positive characteristic relatively prime to $\fq$,
which can also be seen by (a simpler version of) the argument given in the next section.

\subsection{Hecke operators in characteristic $p$}

Fix a prime number $p$ which is relatively prime to $\fn$  and fix a  prime $\fp$ dividing~$p$. 
In  \S\ref{sec:Tq} we defined a $\Q_p$-linear endomorphism $T_{\fp}^{(k)}$
of $M_k(\fn; \Q_p)$.  It follows  from (\ref{Tq}) and the $q$-expansion principle that $T_{\fp}^{(k)}$ induces a  $\Z_p$-linear endomorphism of $M_k(\fn; \Z_p)$, hence a  $\F_p$-linear endomorphism of $M_k(\fn; \Z_p)\otimes \F_p$. The aim of this section is to extend  $T_\fp^{(k)}$ to an endomorphism of the whole space $M_k(\fn;\F_p)$ for all $k\geq 1$. Note that such an endomorphism would be uniquely determined  by its action on  $q$-expansions.  

Emerton, Reduzzi and Xiao propose in  \cite[\S 3]{ERX}  
a different approach in the derived category using the dualising trace map. 

An original feature in our approach is the detailed study of the formal skeleton of the $T_\fp$-correspondence, allowing us to 
compute the effect of~$T_\fp^{(k)}$ on $q$-expansions of modular forms. This is based on an isomorphism between 
the (minimal compactification of the) Hilbert modular variety $\overline{Y(\fc)}$ with $\Gamma_0(\fp)$-level structure
and a suitable relative normalisation (see Lemma \ref{kwlan}) allowing us to determine its formal completion at the cusps
$\infty_\fc$ and $0_\fc$ in Corollary~\ref{formal-completion}.

\subsubsection{Integral models by normalisation}

For any fractional ideal $\fc$, let $X(\fc)=X_1(\fc,\fn)\times\Spec(\Z_p)$. Its minimal compactification
$\overline{X(\fc)}$  is a  flat projective scheme over $\Z_p$, the fibres of which are normal and geometrically irreducible.

The scheme $Y(\fc)_{\Q_p}$ considered in the previous section for $R=\Q_p$ admits a
model $Y(\fc)$ over $\Z_p$ which is normal and is a relative complete intersection. Namely one can define $Y(\fc)$ as  the  quotient by $E$ of the moduli space with $\Gamma_0(\fp)$-level structure $Y^1(\fc)$ defined by Stamm and Pappas \cite{pappas1995}. 
Its minimal compactification $\overline{Y(\fc)}$ is proper over $\Z_p$ and normal.
  
By construction (see \cite[\S1.9]{kisin-lai}) there exist degeneracy morphisms
$$\pi_1 : \overline{Y(\fc)}\to  \overline{X(\fc)} \text{ and } \pi_2 : \overline{Y(\fc)}\to  \overline{X(\fc\fp)}, $$ 
which are proper (see ~\cite[Tag 01W6]{stacks-project}), surjective and  generically finite, but may not be  finite over the non-ordinary locus in the special fibre (see \cite{goren-kassaei}).
In particular, since all schemes are separated,
the  morphism $(\pi_1, \pi_2) : \overline{Y(\fc)}\to  \overline{X(\fc)} \times \overline{X(\fc\fp)}$ is proper, too.

Define $\overline{Z(\fc)'}$ (resp.\  $\overline{Z(\fc)''}$) as the relative normalisation of  
$$\overline{Y(\fc)}_{\Q_p} \overset{\pi_1}{\longrightarrow}  \overline{X(\fc)}_{\Q_p}\to \overline{X(\fc)} \;
 \big(\text{resp.\ } \overline{Y(\fc)}_{\Q_p} \overset{\pi_2}{\longrightarrow}  \overline{X(\fc\fp)}_{\Q_p}\to \overline{X(\fc\fp)}\;\big).
$$ 
Note that while the morphism $\pi_1: \overline{Z(\fc)'}\to \overline{X(\fc)}$ is finite, one cannot in general define a morphism $ \overline{Z(\fc)'}\to \overline{X(\fc\fp)}$ extending $\pi_2:  \overline{Y(\fc)}_{\Q_p}\to   \overline{X(\fc\fp)}_{\Q_p}$. To palliate this problem we consider the relative normalisation $\overline{Z(\fc)}$  of 
\begin{equation}
\overline{Y(\fc)}_{\Q_p} \overset{(\pi_1,\pi_2)}{\longrightarrow}  \overline{X(\fc)}_{\Q_p}\times
 \overline{X(\fc\fp)}_{\Q_p} \to \overline{X(\fc)}\times \overline{X(\fc\fp)}.
 \end{equation} 
By functoriality of relative  normalisation  (e.g.~\cite[Tag 035J]{stacks-project}) there exist canonical 
commutative diagrams: 
$$  
\xymatrix{ \overline{Z(\fc)}  \ar_{\pi'''_1}[rd]  \ar^{\nu_1}[r] &\overline{Z(\fc)'}\ar^{\pi'_1}[d]\\
    & \overline{X(\fc)} }  \text{, and } 
 \xymatrix{   \overline{Z(\fc)}  \ar_{\pi'''_2}[rd]  \ar^{\nu_2}[r] &\overline{Z(\fc)''}\ar^{\pi''_2}[d]\\
  &  \overline{X(\fc\fp)} },
$$
where the horizontal morphisms are proper (e.g.~\cite[Tag 01W6]{stacks-project}) and birational,  as they induce isomorphisms on dense $\Q_p$-fibres. 
Since the schemes $\overline{Z(\fc)}$, $\overline{Z(\fc)'}$ and $\overline{Z(\fc)''}$ are normal (e.g.~\cite[Tag 035L]{stacks-project}), it follows from \cite[Tag 0AB1]{stacks-project}) that $\nu_i$ becomes an 
isomorphism when restricted to any open over which $\pi'''_i$ is finite,  in particular over the ordinary locus (see \cite[Proposition 3.7]{ERX}). 

Finally we prove a useful lemma by reproducing an argument communicated to us by Kai-Wen Lan. 

\begin{lemma}\label{kwlan}
The morphism $(\pi_1, \pi_2)  : \overline{Y(\fc)}\to  \overline{X(\fc)} \times  \overline{X(\fc\fp)}$ is finite and there
is a canonical isomorphism between $\overline{Z(\fc)}$ and $\overline{Y(\fc)}$. 
\end{lemma}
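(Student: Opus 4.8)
\emph{Strategy.} Write $B:=\overline{X(\fc)}\times\overline{X(\fc\fp)}$. The two assertions are linked, and I would show that the finiteness of $(\pi_1,\pi_2)$ is the only real content: once it is established, the identification with $\overline{Z(\fc)}$ is formal. Indeed, suppose $(\pi_1,\pi_2)\colon \overline{Y(\fc)}\to B$ is finite. Then $(\pi_1,\pi_2)_*\cO_{\overline{Y(\fc)}}$ is a coherent sheaf of $\cO_B$-algebras, integral over $\cO_B$, whose restriction to the generic fibre is the push-forward of $\cO_{\overline{Y(\fc)}_{\Q_p}}$ (by flat base change along $\Z_p\to\Q_p$, the morphism being affine). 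Because $\overline{Y(\fc)}$ is normal, this algebra is integrally closed inside its own generic fibre, so it coincides with the integral closure of $\cO_B$ in $\overline{Y(\fc)}_{\Q_p}$; but that integral closure is by definition $\cO_{\overline{Z(\fc)}}$. This yields the asserted canonical isomorphism $\overline{Y(\fc)}\simeq\overline{Z(\fc)}$ over $B$, compatible with the two projections.

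\emph{Reduction to quasi-finiteness.} It therefore remains to prove that $(\pi_1,\pi_2)$ is finite. As recorded above it is already proper, so by Zariski's main theorem it suffices to prove that it is quasi-finite, i.e.\ has finite fibres. On the generic fibre and over the ordinary locus of the special fibre even $\pi_1$ alone is finite, hence so is $(\pi_1,\pi_2)$ there; the entire difficulty is concentrated on the non-ordinary locus of the special fibre, where by the Goren--Kassaei phenomenon \cite{goren-kassaei} the individual maps $\pi_1$ and $\pi_2$ may acquire positive-dimensional fibres.

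\emph{Finite fibres via the moduli interpretation.} To control a fibre of $(\pi_1,\pi_2)$ over a geometric point $(x_1,x_2)$ I would use that a point of $\overline{Y(\fc)}$ above it records a cyclic $\fp$-isogeny whose source is the abelian variety classified by $x_1$ and whose target is the one classified by $x_2$; equivalently, it records the kernel $H\subset A_{x_1}[\fp]$ of that isogeny. The key point is that, with \emph{both} source and target fixed, only finitely many such subgroup schemes $H$ can occur, so that the fibre $\pi_1^{-1}(x_1)\cap\pi_2^{-1}(x_2)$ is finite even when each $\pi_i^{-1}(x_i)$ is not. The main obstacle is to make this rigorous precisely along the non-ordinary locus, where the curves contracted by $\pi_1$ and by $\pi_2$ could \emph{a priori} overlap; here I would invoke the fine geometric study of $\fp$-level structures in the non-ordinary locus of Emerton--Reduzzi--Xiao \cite{ERX}, which for our purposes allows the non-ordinary locus to be ignored, together with the already recorded fact that $\nu_1$ and $\nu_2$ are isomorphisms over the ordinary locus. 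The finitely many cusps are then handled directly from the explicit Tate objects, so that $(\pi_1,\pi_2)$ is quasi-finite on all of $\overline{Y(\fc)}$, hence finite, whereupon the first paragraph supplies the canonical isomorphism with $\overline{Z(\fc)}$.
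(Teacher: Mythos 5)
Your first paragraph (finiteness implies the identification with $\overline{Z(\fc)}$) and your reduction of finiteness to quasi-finiteness via properness are both correct and agree with the paper. The gap is in the third paragraph, which is where all the content of the lemma lives. The assertion that ``with both source and target fixed, only finitely many subgroup schemes $H$ can occur'' is precisely the hard point, and you give no argument for it: over the non-ordinary locus $A[\fp]$ admits positive-dimensional families of $\fo$-stable subgroup schemes (this is exactly why $\pi_1$ and $\pi_2$ fail to be finite there, per Goren--Kassaei), and it is not clear that fixing the isomorphism class of the polarised quotient rigidifies the family. Your appeal to Emerton--Reduzzi--Xiao does not close this gap: the result the paper imports from \cite{ERX} at this stage is the finiteness of $\pi_1$ over the \emph{ordinary} locus (and, later, the codimension-$\geq 2$ support of $\mathrm{R}^1\pi_{1*}\pi_2^*\overline{\underline{\omega}}^{\otimes k}_{\F_p}$, which is used for the construction of $T_\fp$, not here). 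Quasi-finiteness is a pointwise condition that must be verified exactly on the non-ordinary locus, so ``ignoring the non-ordinary locus'' is not an available move for this lemma.

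The paper's proof sidesteps fibre analysis entirely, and this is the idea you are missing. Working on the fine moduli spaces (the quotient by $E$ preserving finiteness), one forms the relative normalisation $Z^1(\fc)$ of $X^1(\fc)\times X^1(\fc\fp)$ in $Y^1(\fc)_{\Q_p}$ and pulls back the two universal abelian schemes along $\pi'''_1$ and $\pi'''_2$. Since $Z^1(\fc)$ is noetherian and normal, Raynaud's theorem (\cite[Cor.~IX.1.4]{raynaud}, \cite[Prop.~I.2.7]{faltings-chai}) extends the universal isogeny from the generic fibre to all of $Z^1(\fc)$; the universal property of the fine moduli space $Y^1(\fc)$ then produces a morphism $\nu\colon Z^1(\fc)\to Y^1(\fc)$ over $X^1(\fc)\times X^1(\fc\fp)$. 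As $(\pi'''_1,\pi'''_2)$ is finite by construction and $(\pi_1,\pi_2)$ is separated, $\nu$ is finite; being also birational with normal target, it is an isomorphism, whence $(\pi_1,\pi_2)$ is finite. In short, instead of bounding fibres of $Y^1(\fc)\to X^1(\fc)\times X^1(\fc\fp)$, one constructs a finite birational map in the opposite direction from the normalisation and lets normality of $Y^1(\fc)$ do the work; without this (or some substitute for your unproved finiteness claim) your argument does not go through.
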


\begin{proof}
Suppose first that $(\pi_1, \pi_2)$ is integral. Then by the universal property of normalisation (e.g.~\cite[Tag 035I]{stacks-project}) 
there exists a unique morphism $\nu$ inducing identity on dense 
$\Q_p$-fibres and fitting in the following commutative diagram
$$\xymatrix{ \overline{Z(\fc)}  \ar_{(\pi'''_1,\pi'''_2)}[rd]  \ar^{\nu}[r] &\overline{Y(\fc)}\ar^{(\pi_1,\pi_2)}[d]\\
    & \overline{X(\fc)} \times  \overline{X(\fc\fp)}. } $$
Since  $(\pi'''_1,\pi'''_2)$ is finite by  construction (e.g.~\cite[Tag 01WJ]{stacks-project}), one deduces that
$\nu$ is finite, too (e.g.~\cite[Tag 035D]{stacks-project}), which together with the fact that 
$\overline{Y(\fc)}$ is normal, implies that  $\nu$ is an isomorphism (see \cite[Tag 0AB1]{stacks-project}).  
   
To prove that $(\pi_1, \pi_2)$ is finite,  by properness it suffices to show that it is quasi-finite,
for which one can restrict to the non-cuspidal locus since the  cuspidal locus in $\overline{Y(\fc)}$ consists only of closed points. We will  establish the finiteness of the morphism 
$(\pi_1, \pi_2)  : Y^1(\fc)\to  X^1(\fc) \times  X^1(\fc\fp)$ on the level of fine moduli spaces, 
since taking the quotient by the finite group $E$ preserves this property. 
  Let $Z^1(\fc)$ be  the relative normalisation of 
\begin{equation}
 Y^1(\fc)_{\Q_p} \overset{(\pi_1,\pi_2)}{\longrightarrow}  X^1(\fc)_{\Q_p}\times
 X^1(\fc\fp)_{\Q_p} \to X^1(\fc)\times X^1(\fc\fp) 
\end{equation} 
and consider the pullbacks  $(\pi'''_1)^* \cA(\fc) $ and $(\pi'''_2)^* \cA(\fc\fp) $ to $Z^1(\fc)$  of the  universal abelian schemes over  $X^1(\fc)$ and $X^1(\fc\fp)$ defined in \S\ref{hmv}. Since $Z^1(\fc)$ is  noetherian and normal, Raynaud's theorem 
(see~\cite[Cor.~IX.1.4]{raynaud} or \cite[Prop.~I.2.7]{faltings-chai}) implies that the universal isogeny between 
the  $\Q_p$-fibres of  these two pullbacks  extends, yielding by the universal property of the fine moduli space $Y^1(\fc)$ a commutative diagram
 $$\xymatrix{ Z^1(\fc)  \ar_{(\pi'''_1,\pi'''_2)}[rd]  \ar^{\nu}[r] & Y^1(\fc) \ar^{(\pi_1,\pi_2)}[d]\\
    & X^1(\fc)\times X^1(\fc\fp). } $$
By the same arguments as above, the finiteness of  $ (\pi'''_1,\pi'''_2)$ and the normality of $Y^1(\fc)$ imply that    $\nu$ is an isomorphism and that $(\pi_1, \pi_2)$ is finite, finishing the proof of the lemma.      
\end{proof}

\subsubsection{The formal skeleton of the $T_\fp$ correspondence}
As the $T_\fp$-correspondence does not preserve individual components, we consider
$X=\coprod_{\fc\in \cC\ell_F^+} X(\fc)$ and $Y=\coprod_{\fc\in \cC\ell_F^+} Y(\fc)$.
Since we will be mostly interested in the effect of $T_\fp$ on $q$-expansions, it is natural to study the  pull-back of $\pi_1$ and $\pi_2$ to  formal neighbourhoods of the  infinity cusps of $\overline{X}$.  Since those cusps belong to the ordinary locus, by the discussion in the previous  paragraph one can use the definition  $\overline{Y}$ based on  normalisations. 

\begin{prop}\label{formal-cusps}
The inverse image under $\pi_1$ of the  cusp $\infty(\fc):\Spec(\Z_p)\to \overline{X(\fc)}$ consists of two cusps
denoted $\infty_{\fc}, 0_{\fc}:\Spec(\Z_p)\to \overline{Y(\fc)}$, and  the formal completion  of $\overline{Y(\fc)}$ along
$\infty_{\fc}$ (resp.\  $0_{\fc}$)   is given by 
$$\mathcal{Y}_\infty(\fc)= \Spf(M_\infty(\fc;\Z_p))\text{ (resp.\ }
\mathcal{Y}_0(\fc) = \Spf(M_\infty(\fc\fp^{-1};\Z_p))).$$
\end{prop}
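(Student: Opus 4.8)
The plan is to compute the fibre $\pi_1^{-1}(\infty(\fc))$ and the associated formal completions moduli-theoretically, through the $\Gamma_0(\fp)$-structures on the Tate object at the cusp. Since the cusp $\infty(\fc)$ lies in the ordinary locus, over which $\nu_1$ is an isomorphism and $\pi_1$ is finite, I may use the moduli description of $\overline{Y(\fc)}$ furnished by Lemma~\ref{kwlan}; equivalently, I identify the formal completion of $\overline{Y(\fc)}$ along $\pi_1^{-1}(\infty(\fc))$ with the relative normalisation of $\Spf(M_\infty(\fc;\Z_p))$ (the completion of $\overline{X(\fc)}$ along $\infty(\fc)$, see \S\ref{q-exp}) inside the generic fibre $\overline{Y(\fc)}_{\Q_p}$. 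Over this formal neighbourhood the universal object degenerates to the Tate object $\mathrm{Tate}_\fc=(\mathbb{G}_m\otimes_\fo\fc^{-1}\fd^{-1})/q^{\fo}$, and a point of $\pi_1^{-1}(\infty(\fc))$ is the datum of an $\fo$-cyclic finite flat subgroup scheme $H\subset\mathrm{Tate}_\fc[\fp]$ of order $\Nm(\fp)$ extended across the boundary.

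First I would analyse $\mathrm{Tate}_\fc[\fp]$. Because the Tate object has toric reduction, its $\fp$-torsion sits in a canonical $\fo$-linear exact sequence
\begin{equation*}
0\to \mu_\fp\otimes_\fo\fc^{-1}\fd^{-1}\to \mathrm{Tate}_\fc[\fp]\to \fp^{-1}/\fo\to 0,
\end{equation*}
in which the left-hand term is connected-multiplicative, the right-hand term is the constant \'etale group (free of rank one over $\fo/\fp$), and which is non-split over $M_\infty(\fc;\Z_p)$, the extension class recording the period~$q$. Since $\fp^{-1}/\fo$ is a line over the field $\fo/\fp$, any admissible $H$ either coincides with the multiplicative subgroup $H_\infty:=\mu_\fp\otimes_\fo\fc^{-1}\fd^{-1}$ or maps isomorphically onto $\fp^{-1}/\fo$, i.e.\ is a section of the sequence.

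For the multiplicative branch, $H_\infty$ is the canonical subgroup, hence already defined over $M_\infty(\fc;\Z_p)$ and deforming uniquely; the corresponding branch of the normalisation needs no base change and is therefore $M_\infty(\fc;\Z_p)$ itself, giving a single cusp $\infty_\fc$ over which $\pi_1$ is a formal isomorphism, with completion $\Spf(M_\infty(\fc;\Z_p))$. For the \'etale branch I would use that a section exists only after the finite base change $\iota_\fp\colon M_\infty(\fc;\Z_p)\hookrightarrow M_\infty(\fc\fp^{-1};\Z_p)$ of \S\ref{adelic-q-exp} (the adjunction of a $\fp$-th root of the period), which is finite of degree $\Nm(\fp)$ and which realises $\pi_1$ on this branch. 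Over $M_\infty(\fc\fp^{-1};\Z_p)$ the set of sections is a torsor under $\Hom_\fo(\fp^{-1}/\fo,\mu_\fp\otimes_\fo\fc^{-1}\fd^{-1})=\mu_\fp(M_\infty(\fc\fp^{-1};\Z_p))$, which is trivial as $\Z_p$ contains no non-trivial $\fp$-power roots of unity; hence there is a \emph{unique} section, giving a single further cusp $0_\fc$. Its completion is the relative normalisation of $M_\infty(\fc;\Z_p)$ along this branch, which—$M_\infty(\fc\fp^{-1};\Z_p)$ being normal and finite over $M_\infty(\fc;\Z_p)$ via $\iota_\fp$—is $\Spf(M_\infty(\fc\fp^{-1};\Z_p))$. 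Together with the count $1+\Nm(\fp)=\Nm(\fp)+1=\deg\pi_1$ this yields exactly the two cusps $\infty_\fc,0_\fc$ with the asserted completions.

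The main obstacle I anticipate is the \emph{integral} analysis of the \'etale branch: proving over $\Z_p$ (and not merely over $\Q_p$, where the $\Nm(\fp)+1$ lines of $\mathrm{Tate}_\fc[\fp]$ are permuted by the local monodromy) that the non-canonical $\Gamma_0(\fp)$-structures assemble scheme-theoretically into a single cusp and that its completed local ring is \emph{precisely} $M_\infty(\fc\fp^{-1};\Z_p)$. This rests on the Mumford-degeneration description of $\mathrm{Tate}_\fc[\fp]$ and of its sections over the ramified cover, together with the bookkeeping of period lattices and polarisation types that pins down the ideal $\fc\fp^{-1}$ (rather than another representative in its class); the normality and finiteness needed to identify the normalisation are then supplied by the explicit structure of the rings $M_\infty$ and of the map $\iota_\fp$ recalled in \S\ref{adelic-q-exp}.
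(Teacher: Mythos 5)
Your route is genuinely different from the paper's, and it has gaps at exactly the points that make the proposition non-trivial. The paper never analyses $\Gamma_0(\fp)$-structures on the Tate object. Instead it (i) invokes excellence of $\overline{X(\fc)}$ so that normalisation and completion commute, which identifies the completed semi-local ring of $\overline{Y(\fc)}$ along $\pi_1^{-1}(\infty(\fc))$ with the normalisation of $M_\infty(\fc;\Z_p)$ in the product of the completed local rings of the \emph{generic} fibre at its cusps; (ii) quotes the already-established characteristic-zero description (\cite[Theorem 8.6]{dimdg}) saying that these generic-fibre rings are $M_\infty(\fc;\Q_p)$ and $M_\infty(\fc\fp^{-1};\Q_p)$; and (iii) checks that for $\fc'\supset\fc$ the normalisation of $M_\infty(\fc;\Z_p)$ in $M_\infty(\fc';\Q_p)$ is $M_\infty(\fc';\Z_p)$, because the latter is finite over the former and is itself normal (again by excellence and normality of $\mathcal{X}(\fc')$). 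The two-point count and both completed rings then fall out with no boundary moduli theory at all.

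Two concrete problems with your argument. First, the identification of the boundary of the \emph{integral} minimal compactification $\overline{Y(\fc)}$ with a moduli space of Tate objects carrying finite flat $\fo$-subgroup schemes of order $\Nm(\fp)$ is precisely the delicate input at $\fp\mid p$ (where $Y(\fc)$ is not smooth and $\pi_1$ is not finite everywhere); you defer it to ``Mumford-degeneration bookkeeping'', but pinning down that the completed local ring at $0_\fc$ is exactly $M_\infty(\fc\fp^{-1};\Z_p)$ \emph{is} the content of the proposition, so this cannot be left as bookkeeping. Second, the torsor argument on the \'etale branch is incorrect as stated: $\mu_\fp(\Z_p)$ need not be trivial (for $p=2$ one has $-1\in\mu_2(\Z_2)$, and the proposition is asserted for all $p$), and in any case counting splittings over a ring that you have not yet proved to be the completed local ring is circular --- the number of cusps is the number of connected components of the formal fibre, which is governed by the splitting field of the extension class (equivalently, by local monodromy), not by the cardinality of the set of splittings over a chosen cover. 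If you want a moduli-theoretic proof you would need the degeneration theory over $\Z_p$ with $\Gamma_0(\fp)$-level made precise; the paper's normalisation-plus-excellence argument is designed to avoid exactly that.
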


\begin{proof} 
By \cite[Theorem 8.6]{dimdg},  the formal completion  $\mathcal{X}(\fc)$ of $\overline{X(\fc)}$ at $\infty(\fc)$ is given by $\Spf(M_\infty(\fc;\Z_p))$. 
Also by {\em loc.\ cit.}  the local completed ring of $\overline{Y(\fc)}_{\Q_p}$ at $\infty_{\fc}$ (resp.\ $0_{\fc}$) is given by $M_\infty(\fc;\Q_p)$ (resp.\   $M_\infty(\fc\fp^{-1};\Q_p)$). 

The scheme $\overline{X(\fc)}$ is of finite type over $\Z_p$, hence excellent by \cite[IV.7.8.6]{EGA}. 
Since by \cite[IV.7.8.3(vii)]{EGA} normalisation  and completion commute for reduced
excellent local rings, it follows from the definition of $\overline{Z(\fc)}$
that its local completion at $\pi_1^{-1}(\infty(\fc))$ is given by 
the normalisation of $M_\infty(\fc;\Z_p)$ in $M_\infty(\fc;\Q_p)  \times M_\infty(\fc\fp^{-1};\Q_p)$. 
In particular, it follows that $\pi_1^{-1}(\infty(\fc))$ consists of two points. 

Since for any $\fc'\supset \fc$ the ring $M_\infty(\fc';\Z_p) $ is a finitely generated module and hence integral over $M_\infty(\fc;\Z_p)$, it  remains to show that any element of $M_\infty(\fc';\Q_p) $ which is integral over $M_\infty(\fc;\Z_p)$, belongs to $M_\infty(\fc';\Z_p)$. 
 
Since $\mathcal{X}(\fc')$ is normal and excellent its completed local ring at infinity $M_\infty(\fc';\Z_p)$ is a normal ring, 
hence  it is integrally closed in $M_\infty(\fc';\Q_p)$.
Since the embedding $M_\infty(\fc;\Z_p) \to M_\infty(\fc';\Q_p)$ factors through $M_\infty(\fc';\Z_p)$,
any element of $M_\infty(\fc';\Q_p)$ which is integral over $M_\infty(\fc;\Z_p)$ is in particular integral
over $M_\infty(\fc';\Z_p)$, hence belongs to the latter ring, as was to be shown.

Another way to see this is to use that $M_\infty(\fc;\Z_p)$ is isomorphic to the $\fo_+^\times$-invariants in  an intersection of power series rings $R_\sigma^\wedge(\fc;\Z_p)$ as in \cite[\S2]{dimdg}, where $\sigma$ runs over a rational cone decomposition of $F_+^\times$. The claim then follows from the well-known analogous statement for polynomial and power series rings. 
\end{proof}

By looking at the generic fibre, one can check that  $\pi_2$  sends   $0_{\fc}$ (resp.\  $\infty_{\fc}$)  to $\infty(\fc\fp,\fp^{-1})$ (resp.\ $\infty(\fc\fp)$). The description of $\pi_1$ in Proposition \ref{formal-cusps}, along with an analogous analysis of $\pi_2$ gives the following result.

\begin{cor}\label{formal-completion}
\begin{enumerate}[(a)]
\item The formal completion of $\pi_1$ along $\infty_\fc$ gives an isomorphism
$\cY_\infty(\fc) \to \cX_\infty(\fc)$, which corresponds to the identity on~$M_\infty(\fc;\Z_p)$.
\item The formal completion of $\pi_1$ along $0_\fc$ gives a morphism
$\cY_0(\fc) \to \cX_\infty(\fc)$, which corresponds to the natural inclusion
$M_\infty(\fc;\Z_p) \hookrightarrow M_\infty(\fc \fp^{-1};\Z_p)$.
\item The formal completion of $\pi_2$ along $\infty_\fc$ gives a morphism
$\cY_\infty(\fc) \to \cX_\infty(\fc\fp)$, which corresponds to the natural inclusion
$M_\infty(\fc\fp;\Z_p) \hookrightarrow M_\infty(\fc;\Z_p)$.
\item The formal completion of $\pi_2$ along $0_\fc$ gives an isomorphism
$\cY_0(\fc) \to \cX_\infty(\fc \fp,\fp^{-1})$, which corresponds to the identity on~$M_\infty(\fc\fp^{-1};\Z_p)$.
\end{enumerate}
\end{cor}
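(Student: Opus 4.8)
The plan is to deduce the four statements from the normalization computation already carried out in the proof of Proposition~\ref{formal-cusps}. I would first pin down the behaviour of $\pi_1$ and $\pi_2$ at the cusps on the dense generic fibre $\overline{Y(\fc)}_{\Q_p}$, where the maps on completed rings are classical, and then upgrade to the integral models using that $\overline{Y(\fc)}=\overline{Z(\fc)}$ is a relative normalization (Lemma~\ref{kwlan}) together with the commutation of normalization and completion for reduced excellent local rings (\cite[IV.7.8.3(vii)]{EGA}), exactly as in Proposition~\ref{formal-cusps}.

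First I would record the generic-fibre maps, using the dictionary of \S\ref{adelic-q-exp} between the inclusion $\iota_\fp$ (the $\fp$-power substitution), the normalised trace $t_\fp$ (the $\fp^{-1}$-power substitution), and operations on $q$-expansions. On $\overline{Y(\fc)}_{\Q_p}$ the cusp $\infty_\fc$ (resp.\ $0_\fc$) has completed ring $M_\infty(\fc;\Q_p)$ (resp.\ $M_\infty(\fc\fp^{-1};\Q_p)$). Tracing the Tate objects, as was done for $\pi_1$ inside the proof of Proposition~\ref{formal-cusps} and for $\pi_2$ in the remark preceding the corollary, the forgetful map $\pi_1$ sends $\infty_\fc$ to $\infty(\fc)$ by the identity of $M_\infty(\fc;\Q_p)$ and sends $0_\fc$ to $\infty(\fc)$ by the $\fp$-power inclusion $M_\infty(\fc;\Q_p)\hookrightarrow M_\infty(\fc\fp^{-1};\Q_p)$; while the quotient map $\pi_2$ sends $\infty_\fc$ to $\infty(\fc\fp)$ by the $\fp$-power inclusion $M_\infty(\fc\fp;\Q_p)\hookrightarrow M_\infty(\fc;\Q_p)$ and sends $0_\fc$ to $\infty(\fc\fp,\fp^{-1})$, whose completed ring is $M_\infty(\fc\fp^{-1};\Q_p)$, by the identity. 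These four assertions are precisely the $\Q_p$-versions of (a)--(d).

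To pass to $\Z_p$, I would use that the completion of $\pi_i$ at each cusp is the structural map from the completed ring of the base to the integral closure of its image inside the corresponding generic-fibre completed ring of $\overline{Y(\fc)}$. For (a) the image of $M_\infty(\fc;\Z_p)$ under the identity is already normal, so the completion of $\pi_1$ is the identity and hence an isomorphism; (d) is identical with $M_\infty(\fc\fp^{-1};\Z_p)$. For (b) the integral closure of the image of $M_\infty(\fc;\Z_p)$ in $M_\infty(\fc\fp^{-1};\Q_p)$ is $M_\infty(\fc\fp^{-1};\Z_p)$ — this is exactly the normalization statement proved in Proposition~\ref{formal-cusps} — and the structural map is the integral inclusion $M_\infty(\fc;\Z_p)\hookrightarrow M_\infty(\fc\fp^{-1};\Z_p)$. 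For (c) the same argument, now over the base $\overline{X(\fc\fp)}$, identifies the integral closure of $M_\infty(\fc\fp;\Z_p)$ in $M_\infty(\fc;\Q_p)$ as $M_\infty(\fc;\Z_p)$, giving the inclusion $M_\infty(\fc\fp;\Z_p)\hookrightarrow M_\infty(\fc;\Z_p)$.

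The main obstacle I expect is purely the bookkeeping of the generic fibre: correctly matching the two cusps $\infty_\fc,0_\fc$ of the $\Gamma_0(\fp)$-variety to the two analytic degeneration data, so that the forgetful and quotient maps are attached to the right completed-ring homomorphisms (two identities and two $\fp$-power inclusions) in the correct positions. Once these are fixed, the only genuine step beyond Proposition~\ref{formal-cusps} is re-running its normalization argument verbatim with the base $\overline{X(\fc\fp)}$ in place of $\overline{X(\fc)}$, the integrality of $q$-expansion coefficients under the $\fp$-power and trace substitutions being immediate.
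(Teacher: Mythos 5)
Your argument is correct and is essentially the paper's own: the corollary is presented there as an immediate consequence of Proposition~\ref{formal-cusps} together with a generic-fibre identification of the images of $\infty_\fc$ and $0_\fc$ under $\pi_2$, and your write-up merely makes explicit the normalisation step (integral closure of the completed ring of the base in the generic-fibre completed ring of $\overline{Y(\fc)}$, using Lemma~\ref{kwlan} and the excellence argument) that the paper leaves implicit. Your cusp bookkeeping --- two identities and two $\fp$-power inclusions in the stated positions, with the completed ring at $\infty(\fc\fp,\fp^{-1})$ being $M_\infty(\fc\fp\cdot\fp^{-2};\Z_p)=M_\infty(\fc\fp^{-1};\Z_p)$ --- matches the statement exactly.
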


Note that in view of  \cite[IV.7.8.3(vii)]{EGA}, Proposition \ref{formal-cusps} implies that 
$\overline{Y(\fc)}_{\F_p}$ has (at most) two horizontal (in the sense of \cite{goren-kassaei}) geometrically irreducible  components,  one containing  $0_{\fc,\F_p}$ and the other  containing $\infty_{\fc,\F_p}$.

\subsubsection{Integrality of the normalised trace}
 Let $U\to \Spec(\Z_p)$ be any open affine  in $\overline{X(\fc)}$ containing $\infty(\fc)$. 
  By \S\ref{q-exp} the  local  completed ring of  $U$   at  $\infty(\fc) $  is given by $M_\infty(\fc;\Z_p)$. 

The sheaves $\pi_i^*\overline{\underline{\omega}}^{\otimes k}$ ($i=1,2$) can be trivialised
over $\mathcal{Y}_\infty(\fc)\coprod \mathcal{Y}_0(\fc)$  yielding the following  commutative diagram: 
\begin{equation}\label{defn-Tp}
\xymatrix{ \rH^0(\pi_1^{-1}(U),\pi_2^*\overline{\underline{\omega}}^{\otimes k}) \ar[rr] \ar[d]& & 
M_\infty(\fc;\Z_p)\ar@{^{(}->}[d]\ar@{}[r]|-{\displaystyle\times} & M_\infty(\fc\fp^{-1};\Z_p) \ar@{^{(}->}[d]\\
\rH^0(\pi_1^{-1}(U_{\Q_p}),\pi_2^*\overline{\underline{\omega}}^{\otimes k}) \ar@{^{(}->}[rr] \ar[d]& &  M_\infty(\fc;\Q_p)\ar[d]_{ \Nm(\fp)^k}\ar@{}[r]|-{\displaystyle\times} & M_\infty(\fc\fp^{-1};\Q_p) \ar@{=}[d]\\
\rH^0(\pi_1^{-1}(U_{\Q_p}),\pi_1^*\overline{\underline{\omega}}^{\otimes k}) \ar@{^{(}->}[rr] \ar[d]_{\Nm(\fp)^{-1}\Tr(\pi_1)}& &  M_\infty(\fc;\Q_p)\ar[d]_{ \Nm(\fp)^{-1}}^{\displaystyle  \quad+}\ar@{}[r]|-{\displaystyle\times} & M_\infty(\fc\fp^{-1};\Q_p) \ar[dl]^{t_\fp}\\
\rH^0(U_{\Q_p},\overline{\underline{\omega}}^{\otimes k}) \ar@{^{(}->}[rr]& &
 M_\infty(\fc;\Q_p) & },
\end{equation}
where all horizontal morphisms are $q$-expansions at $\pi_1^{-1}(\infty(\fc))=\{\infty_\fc,0_\fc\}$ (see Corollary \ref{formal-completion}), except the last one which is the usual $q$-expansions map \eqref{eq:q-exp} at $\infty(\fc)$.
The middle vertical arrow comes from the connecting morphism of sheaves $\pi_2^*\overline{\underline{\omega}}^{\otimes k}\to \pi_1^*\overline{\underline{\omega}}^{\otimes k}$ described in \cite[\S1.11.1]{kisin-lai} and 
$t_\fp: M_\infty(\fc\fp^{-1};\Q_p)\to M_\infty(\fc;\Q_p)$ denotes the normalised trace (see~\S\ref{adelic-q-exp}). 
The commutativity of the middle and lower square follow from well known computations over  $\Q_p$.
The commutativity of the upper square comes from the commutativity of $q$-expansion and base change. Note that we do {\it not} claim the injectivity of the upper horizontal map, since when $\pi_1$ is not finite $\pi_1^{-1}(U_{\F_p})$  has  geometrically irreducible components  which contain neither  $\infty_{\fc,\F_p}$ nor $0_{\fc,\F_p}$.  

Denote by $\eta$ the composite of the morphisms of the first column in \eqref{defn-Tp}. Following the 
vertical maps on the right side of \eqref{defn-Tp} implies that if $k\geq 1$ then
$$\eta(\rH^0(U,\pi_{1*} \pi_2^*\overline{\underline{\omega}}^{\otimes k}))\subset 
\rH^0(U_{\Q_p},\overline{\underline{\omega}}^{\otimes k})\cap M_\infty(\fc;\Z_p)$$

Proposition  \ref{prop:q-exp}  implies that 
$$\rH^0(U_{\Q_p},\overline{\underline{\omega}}^{\otimes k})\cap M_\infty(\fc;\Z_p)
=\rH^0(U,\overline{\underline{\omega}}^{\otimes k}),$$
and that for any $U'\subset U$ as above the corresponding $\eta'$ and $\eta$ agree. 

Consider a  finite open affine cover $(U_i)_{i\in I(\fc)} $ of $\overline{X(\fc)}$ such that $\infty(\fc) \in U_i$ for all $i\in I(\fc)$ (one may, for example, consider an embedding of  $\overline{X(\fc)}$ in a projective space over $\Z_p$ and take complements of hyperplane sections which do not meet  $\infty(\fc) $). 
Let $I=\bigcup_{\fc\in \cC\ell_F^+} I(\fc)$ with $\fc$ running over the fixed  set of representatives of~$\cC\ell_F^+$. Then  $(U_i)_{i\in I} $ is an open affine cover  of $\overline{X}$. By the above discussion we obtain: 

\begin{prop} \label{Tp-integral}
For  $k\geq 1$, diagram \eqref{defn-Tp}  defines a   morphism of sheaves  $\eta: \pi_{1*} \pi_2^*\overline{\underline{\omega}}^{\otimes k}\to \overline{\underline{\omega}}^{\otimes k}$ on  $\overline{X}$. 
\end{prop}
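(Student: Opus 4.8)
The plan is to glue the local maps furnished by the preceding discussion into a single morphism of coherent sheaves. Write $\mathcal{F}=\pi_{1*}\pi_2^*\overline{\underline{\omega}}^{\otimes k}$ and $\mathcal{G}=\overline{\underline{\omega}}^{\otimes k}$, both coherent on $\overline{X}$ (the former by properness of $\pi_1$). Over the generic fibre $\pi_1$ is finite, so the trace map $\Tr(\pi_1)$ is defined and the first column of \eqref{defn-Tp} assembles into a genuine morphism of sheaves $\eta_{\Q_p}\colon \mathcal{F}_{\Q_p}\to\mathcal{G}_{\Q_p}$ on $\overline{X}_{\Q_p}$, namely the classical Hecke morphism. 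Since $\overline{X}$ is a disjoint union over $\fc$, it suffices to work on each $\overline{X(\fc)}$ separately.

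First I would record that on each member $U_i$ of the chosen affine cover the map $\eta$ is integral. Indeed, for an affine open $U\ni\infty(\fc)$ the composite of the first column sends an integral section $s\in \rH^0(U,\mathcal{F})$ to a section of $\mathcal{G}$ over $U_{\Q_p}$ whose $q$-expansion at $\infty(\fc)$, read off from the right column of \eqref{defn-Tp}, equals $\Nm(\fp)^{k-1}$ times the $\infty_\fc$-expansion plus $t_\fp$ of the $0_\fc$-expansion. As both summands lie in $M_\infty(\fc;\Z_p)$ precisely because $k-1\geq 0$, the $q$-expansion principle (Proposition \ref{prop:q-exp}) identifies $\rH^0(U_{\Q_p},\mathcal{G})\cap M_\infty(\fc;\Z_p)$ with $\rH^0(U,\mathcal{G})$ and forces $\eta(s)\in \rH^0(U,\mathcal{G})$. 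Being $\cO(U)$-linear, $\eta_U$ is thus an element of $\rH^0(U,\mathcal{H}om(\mathcal{F},\mathcal{G}))$ lifting $\eta_{\Q_p}|_{U}$.

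Next I would glue. The sections $\eta_{U_i}$ all arise by restricting the single morphism $\eta_{\Q_p}$, so their images in $\mathcal{H}om(\mathcal{F},\mathcal{G})_{\Q_p}$ over $(U_i\cap U_j)_{\Q_p}$ coincide. Since $\mathcal{G}$ is locally free and $\overline{X}$ is flat over $\Z_p$, the sheaf $\mathcal{H}om(\mathcal{F},\mathcal{G})$ is $p$-torsion-free, so its sections over any open inject into those over the generic fibre; hence $\eta_{U_i}$ and $\eta_{U_j}$ already agree on $U_i\cap U_j$. By the sheaf property these local sections glue to a global $\eta\in \rH^0(\overline{X},\mathcal{H}om(\mathcal{F},\mathcal{G}))=\Hom_{\overline{X}}(\mathcal{F},\mathcal{G})$, which is the desired morphism.

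I expect the one genuine subtlety to be the integrality in the second paragraph rather than the gluing: over the special fibre $\pi_1$ may fail to be finite, so there is no integral trace map to write $\eta$ down directly, and the factor $\Nm(\fp)^{-1}$ entering the normalised trace is not integral. The whole construction therefore proceeds over $\Q_p$ and is pulled back to $\Z_p$ only after verifying, via the ordinarity of the cusps and the $q$-expansion principle, that the offending denominator is cancelled by the $\Nm(\fp)^{k}$ coming from the connecting morphism; this is exactly the point where the hypothesis $k\geq 1$ is indispensable.
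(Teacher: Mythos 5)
Your proof is correct and takes essentially the same route as the paper: one constructs $\eta$ over $\Q_p$ where $\pi_1$ is finite, checks integrality on affine opens containing $\infty(\fc)$ via the cancellation $\Nm(\fp)^{k}\cdot\Nm(\fp)^{-1}=\Nm(\fp)^{k-1}$ on $q$-expansions together with Proposition~\ref{prop:q-exp}, and then glues over a cover by such opens. The only (harmless) variation is that you justify agreement on overlaps by the $p$-torsion-freeness of $\mathcal{H}om(\pi_{1*}\pi_2^*\overline{\underline{\omega}}^{\otimes k},\overline{\underline{\omega}}^{\otimes k})$, whereas the paper invokes Proposition~\ref{prop:q-exp} to show the maps attached to $U'\subset U$ are compatible --- both amount to the observation that an integral morphism is determined by its restriction to the generic fibre.
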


\subsubsection{Definition of  $T_{\fp}^{(k)}$}\label{sec:3.3.4}

The adjunction morphism   $\overline{\underline{\omega}}^{\otimes k}_{\F_p}\to 
\pi_{2*} \pi_2^* \overline{\underline{\omega}}^{\otimes k}_{\F_p}$ of sheaves on $\overline{X}_{\F_p}$ induces: 
\begin{equation}\label{adj}
 \rH^0(\overline{X}_{\F_p},\overline{\underline{\omega}}^{\otimes k}_{\F_p})\overset{ \pi_2^*}{\longrightarrow}  \rH^0(\overline{X}_{\F_p},\pi_{2*} \pi_2^*\overline{\underline{\omega}}^{\otimes k}_{\F_p})
=\rH^0(\overline{Y}_{\F_p},\pi_2^*\overline{\underline{\omega}}^{\otimes k}_{\F_p})=
\rH^0(\overline{X}_{\F_p},\pi_{1*} \pi_2^*\overline{\underline{\omega}}^{\otimes k}_{\F_p}).
\end{equation}

By Proposition \ref{Tp-integral} one has a morphism: 
\begin{equation}\label{eta-p}
\rH^0(\overline{X}_{\F_p},(\pi_{1*} \pi_2^*\overline{\underline{\omega}}^{\otimes k})\otimes\F_p)
\overset{\eta\otimes\F_p}{\longrightarrow} \rH^0(\overline{X}_{\F_p},\overline{\underline{\omega}}^{\otimes k}_{\F_p} ),
\end{equation}
which we will now extend to $\rH^0(\overline{X}_{\F_p},\pi_{1*} \pi_2^*\overline{\underline{\omega}}^{\otimes k}_{\F_p})$. The short exact sequence 
$$0\to   \pi_2^*\overline{\underline{\omega}}^{\otimes k} \overset{\cdot p}{\to}  \pi_2^*\overline{\underline{\omega}}^{\otimes k} \to
 \pi_2^*\overline{\underline{\omega}}^{\otimes k}_{\F_p} \to 0$$
of sheaves on $\overline{X}$  yields the long exact sequence: 
\begin{equation*}
\begin{split}
0\to   \pi_{1*}\pi_2^*\overline{\underline{\omega}}^{\otimes k} \overset{\cdot p}{\to}  \pi_{1*}\pi_2^*\overline{\underline{\omega}}^{\otimes k} \to & \pi_{1*}\pi_2^*\overline{\underline{\omega}}^{\otimes k}_{\F_p} \to \\
\to & \mathrm{R}^1\pi_{1*}\pi_2^*\overline{\underline{\omega}}^{\otimes k} \overset{\cdot p}{\to}   \mathrm{R}^1\pi_{1*}\pi_2^*\overline{\underline{\omega}}^{\otimes k} \to \mathrm{R}^1\pi_{1*}\pi_2^*\overline{\underline{\omega}}^{\otimes k}_{\F_p}
\end{split}
\end{equation*}

By \cite[Proposition 3.19]{ERX},
the support of $\mathrm{R}^1\pi_{1*}\pi_2^*\overline{\underline{\omega}}^{\otimes k}_{\F_p}$ has codimension at least $2$ in 
$\overline{X}_{\F_p}^\Ra$, {\it i.e.,}  its localisation $(\mathrm{R}^1\pi_{1*}\pi_2^*\overline{\underline{\omega}}^{\otimes k}_{\F_p})_x$ at any codimension $1$ point $x$ in  $\overline{X}_{\F_p}^\Ra$ vanishes.
Note that in {\it loc.\ cit.} $X$ is replaced by the so-called splitting model, which is different if $p$ divides $\Nm(\fd)$,
but it contains $X^\Ra$ as an open subscheme.
Localising at any such $x$ the above long exact sequence then yields 
$$(\mathrm{R}^1\pi_{1*}\pi_2^*\overline{\underline{\omega}}^{\otimes k})_x\otimes \F_p=
((\mathrm{R}^1\pi_{1*}\pi_2^*\overline{\underline{\omega}}^{\otimes k})\otimes \F_p)_x=0. $$
Since $\pi_1$ is proper, the sheaf   $\mathrm{R}^1\pi_{1*}\pi_2^*\overline{\underline{\omega}}^{\otimes k}$ is coherent  and Nakayama's lemma implies that 
$(\mathrm{R}^1\pi_{1*}\pi_2^*\overline{\underline{\omega}}^{\otimes k})_x=0$. 
The above long exact sequence then implies that the morphism of sheaves 
\begin{equation}\label{eta-s}
(\pi_{1*} \pi_2^*\overline{\underline{\omega}}^{\otimes k})\otimes\F_p \to 
\pi_{1*} \pi_2^*\overline{\underline{\omega}}^{\otimes k}_{\F_p}
\end{equation}
is an isomorphism outside a subscheme $\overline{X}_{\F_p}^\Ra$ of codimension at least~$2$.
Since the complement of $\overline{X}^\Ra$ in~$\overline{X}$ has codimension at least~$2$, \eqref{eta-s} is an isomorphism
outside a subscheme $Z$ of $\overline{X}_{\F_p}$ of codimension at least~$2$.

Proposition \ref{Tp-integral} then yields: 
\begin{equation}\label{eta}
\begin{split}
\rH^0(\overline{X}_{\F_p}, \pi_{1*} \pi_2^*\overline{\underline{\omega}}^{\otimes k}_{\F_p}) & \overset{\mathrm{res}}{\longrightarrow}  
\rH^0(\overline{X}_{\F_p}\backslash Z , \pi_{1*} \pi_2^*\overline{\underline{\omega}}^{\otimes k}_{\F_p})
 \underset{\sim}{\overset{\eqref{eta-s}}{\longleftarrow}}
  \rH^0(\overline{X}_{\F_p}\backslash Z , (\pi_{1*} \pi_2^*\overline{\underline{\omega}}^{\otimes k})\otimes\F_p)\to 
   \\
&  \overset{\eta\otimes\F_p}{\longrightarrow} \rH^0(\overline{X}_{\F_p}\backslash Z,\overline{\underline{\omega}}^{\otimes k}_{\F_p}) =
\rH^0(\overline{X}_{\F_p},\overline{\underline{\omega}}^{\otimes k}_{\F_p}),
\end{split}
\end{equation}
where the last equality follows from an algebraic version of Hartogs' theorem, since $\overline{X}_{\F_p}$ is Noetherian normal,
$\overline{\underline{\omega}}^{\otimes k}_{\F_p}$ is locally free and $Z$ is of codimension at least~$2$.

For $k\geq 1$, the   endomorphism  $T_{\fp}^{(k)\vee}$ of  $\rH^0(\overline{X}_{\F_p},\overline{\underline{\omega}}^{\otimes k}_{\F_p})$ is defined as  the composite of \eqref{adj} with \eqref{eta}. Finally  we set:
\begin{equation}
T_{\fp}^{(k)}=T_{\fp}^{(k)\vee}\circ \langle\fp\rangle: M_k(\fn;\F_p)\to   M_k(\fn;\F_p).
\end{equation}

\subsubsection{The effect of $T_\fp$ on $q$-expansions}

We first compute the effect of \eqref{adj} on $q$-expansions. 

By   Lemma \ref{formal-diamond} and Corollary \ref{formal-completion}   for any $f\in M_k(\fn;\F_p)$ the $q$-expansion  of $ \pi_2^*(f)\in \rH^0(\overline{Y}_{\F_p},\pi_2^*\overline{\underline{\omega}}^{\otimes k})$  at $\pi_1^{-1}(\infty(\fc))=\{\infty_\fc,0_\fc\}$ equals 
\begin{equation}\label{qexp-adj}
(\iota_\fp(f_{\fc\fp}), (\langle\fp^{-1}\rangle f)_{\fc\fp^{-1}})\in 
M_\infty(\fc;\F_p)\times M_\infty(\fc\fp^{-1};\F_p),  
\end{equation}
where $\iota_\fp: M_\infty(\fc\fp;\F_p)\to M_\infty(\fc;\F_p)$ is the natural inclusion (see~\S\ref{adelic-q-exp}).

Combining  \eqref{defn-Tp} and \eqref{qexp-adj} shows that for $f\in M_k(\fn;\F_p)$ one has:
$$ (T_{\fp}^{(k)\vee}f)_\fc= \Nm(\fp)^{k-1} f_{\fc\fp} + t_\fp((\langle\fp^{-1}\rangle f)_{\fc\fp^{-1}}),\text{ and}$$
\begin{equation}\label{qexp-Tp}
 (T_{\fp}^{(k)}f)_\fc=   t_\fp(f_{\fc\fp^{-1}})+ \Nm(\fp)^{k-1} (\langle\fp\rangle f)_{\fc\fp}, 
\end{equation}
where $t_\fp: M_\infty(\fc\fp^{-1};\F_p)\to M_\infty(\fc;\F_p)$ denotes the normalised trace (see~\S\ref{adelic-q-exp}).

Letting  $\fc$ run over the fixed   set of representatives of $\cC\ell_F^+$,  the facts established in \S\ref{adelic-q-exp} lead to
a proof of Theorem~\ref{thm:Tp-q}.

Note that by the $q$-expansion principle the  Hecke and diamond operators generate a commutative algebra. 

Due to its importance for the sequel, we draw the reader's attention to the fact that
an inspection of the action on formal $q$-expansions shows that, in characteristic~$p$,
$T_\fp^{(k)}$ acts like a $U_\fp$-operator as soon as $k>1$.

\subsection{The Hasse invariant}
In the sequel, the Hasse invariant $h \in M_{p-1}(\fn, 1;\Fbar_p)$ will play a fundamental
role for passing from weight~$1$ to weight~$p$; in combination with the Hecke operators $T_\fp$
for $\fp$ dividing~$p$, the Frobenius operators will be derived from it in the next subsection.
The Hasse invariant $h$ has $q$-expansion equal to~$1$ at the cusp at $\infty$ of each connected component of  $\overline{X_1(\fn)}$.

For the existence of the Hasse invariant, the reader can either refer to \cite[\S7.12-14]{andreatta-goren}
or \cite[\S 1.5]{kisin-lai}. By p.~740 of the latter reference, the Hasse invariant, which is {\em a priori} constructed
on $\overline{X^1_1(\fn)}$, is independent of the polarisation and thus $E$-invariant,  so that it descends to $\overline{X_1(\fn)}$,
as needed.

\subsection{The Frobenius operators}

In this section we define operators $V_P$ taking  weight $1$ forms to weight~$p$ forms. 
A main feature of our treatment is that its only ingredients are the Hecke operators $T_\fp^{(1)}$ and $T_\fp^{(p)}$
for primes $\fp \mid p$ and the total Hasse invariant.

Let $\varepsilon$  be a fixed $\Fbar_p^\times$-valued Hecke character of~$F$ of conductor dividing $\fn$.

For squarefree ideals $P \subset \fo$ dividing~$p$, we define the {\em Frobenius operator} 
\begin{equation} V_P: M_1(\fn,\varepsilon;\Fbar_p) \to M_p(\fn,\varepsilon;\Fbar_p)
 \end{equation}
 inductively as follows:
\begin{equation} V_1 := V_{\fo} := h \textnormal{ and }
V_{P\fp} := \varepsilon(\fp)^{-1} \left( V_P T_\fp^{(1)} - T_\fp^{(p)} V_P \right),  
\end{equation}
where $\fp $ is a prime ideal dividing~$(p)$ coprime to~$P$.

The description of the action on $q$-expansions in the following proposition shows that the $V_P$ 
do not depend on the order in which the prime divisors of~$P$ are used in the recursive definition,
hence they are well-defined.  It also shows that $V_P$ commutes with the Hecke operators $T_\fq$ for all $\fq\nmid p\fn$.

\begin{prop}
For every squarefree ideal $P \subset \fo$ dividing~$p$ one has: 
 \begin{equation}\label{eq:Vp}
 \begin{split}
 a((0), V_P f) & = a((0),f)[P^{-1}],   \text{ and}\\
a(\fr, V_P f) & = a(\fr/P,f) \text{ for all } (0)\neq\fr\subset \fo.
 \end{split}
 \end{equation}
\end{prop}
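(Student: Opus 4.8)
The plan is to argue by induction on the number of prime divisors of the squarefree ideal $P$, turning the recursive definition of $V_P$ into a recursion on adelic coefficients by feeding Theorem~\ref{thm:Tp-q} into the induction hypothesis. The base case $P=\fo$ is immediate: since $V_\fo=h$ and the Hasse invariant has $q$-expansion equal to~$1$ at every cusp $\infty(\fc)$, multiplication by $h$ leaves every adelic coefficient unchanged, so $a(\fr,V_\fo f)=a(\fr,f)=a(\fr/\fo,f)$ and $a((0),V_\fo f)=a((0),f)=a((0),f)[\fo^{-1}]$ because $[\fo^{-1}]$ is the trivial class. For the inductive step I would fix a prime $\fp\mid p$ coprime to~$P$, assume the formula for $V_P$ applied to \emph{every} weight~$1$ form of character~$\varepsilon$, and evaluate the coefficients of $V_{P\fp}f=\varepsilon(\fp)^{-1}\bigl(V_PT_\fp^{(1)}f-T_\fp^{(p)}V_Pf\bigr)$.

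The key arithmetic input, and the reason the recursion is engineered this way, is that $\fp\mid p$ forces $p\mid\Nm(\fp)$, so $\Nm(\fp)^{p-1}=0$ in $\Fbar_p$: the weight-$p$ operator $T_\fp^{(p)}$ thereby loses its lowering term and acts purely by $a(\fr,T_\fp^{(p)}g)=a(\fp\fr,g)$. Concretely, for a nonzero ideal $\fr$ I would use $\langle\fp\rangle f=\varepsilon(\fp)f$ together with the weight-$1$ case ($\Nm(\fp)^{0}=1$) of Theorem~\ref{thm:Tp-q} and the induction hypothesis (applied to the form $T_\fp^{(1)}f$, which again lies in $M_1(\fn,\varepsilon;\Fbar_p)$ since $T_\fp^{(1)}$ commutes with the diamond operators) to get
\[
a(\fr,V_PT_\fp^{(1)}f)=a(\fr/P,T_\fp^{(1)}f)=a(\fp\fr/P,f)+\varepsilon(\fp)\,a(\fr/(P\fp),f),
\]
while the vanishing of $\Nm(\fp)^{p-1}$ yields
\[
a(\fr,T_\fp^{(p)}V_Pf)=a(\fp\fr,V_Pf)=a(\fp\fr/P,f).
\]
The raising terms $a(\fp\fr/P,f)$ cancel, and multiplying the remainder by $\varepsilon(\fp)^{-1}$ leaves exactly $a(\fr/(P\fp),f)$, the desired coefficient for $V_{P\fp}$.

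The constant term is treated identically, now in the group algebra $\Fbar_p[\cC\ell_F^+]$ where $[\fq]$ acts by translation. Using the constant-term part of Theorem~\ref{thm:Tp-q}, the hypothesis $a((0),V_Pg)=a((0),g)[P^{-1}]$, and again $\Nm(\fp)^{p-1}=0$, the contribution of $V_PT_\fp^{(1)}f$ is $a((0),f)[\fp P^{-1}]+\varepsilon(\fp)\,a((0),f)[\fp^{-1}P^{-1}]$ whereas that of $T_\fp^{(p)}V_Pf$ is $a((0),f)[\fp P^{-1}]$; the first summands cancel and $\varepsilon(\fp)^{-1}$ times the remainder is $a((0),f)[(P\fp)^{-1}]$, as claimed. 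Since the resulting formula is manifestly symmetric in the prime divisors of~$P$, the $q$-expansion principle (Proposition~\ref{prop:q-exp}, injectivity of the adelic $q$-expansion) then shows that $V_P$ is independent of the order in which the primes are peeled off, giving well-definedness, and similarly yields the stated commutation with $T_\fq$ for $\fq\nmid p\fn$. I expect the only genuinely delicate point to be the bookkeeping---tracking the twists $\varepsilon(\fp)^{\pm1}$ and the class translations $[\fp^{\pm1}]$ so that the raising terms cancel cleanly; the conceptual heart, the identity $\Nm(\fp)^{p-1}=0$ in $\Fbar_p$, is precisely what makes the cancellation occur.
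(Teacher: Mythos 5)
Your proposal is correct and follows essentially the same route as the paper: induction on the number of prime divisors of $P$, applying the inductive hypothesis to $T_\fp^{(1)}f$, invoking the $q$-expansion formulas of Theorem~\ref{thm:Tp-q} in weights $1$ and $p$, and using the vanishing of $\Nm(\fp)^{p-1}$ in $\Fbar_p$ so that the raising terms cancel and only $\varepsilon(\fp)^{-1}a(\fr/(P\fp),\langle\fp\rangle f)=a(\fr/(P\fp),f)$ survives. The paper's proof is this exact computation (it adds only the passing remark that squarefreeness of $P$ is what makes expressions like $a(\fr\fp/P,f)$ well defined under the convention that coefficients at non-integral ideals vanish), so nothing further is needed.
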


\begin{proof}
If $P=\fo$, the result is trivial.
Assume that the result holds for~$P$ and let $\fp$ be a prime ideal dividing~$(p)$ coprime to~$P$. Using 
\eqref{eq:Tp-general}, for all $(0)\neq\fr\subset \fo$  we have
\begin{equation*}
\begin{split}
a(\fr, V_{P\fp} f)
&= \varepsilon(\fp)^{-1} \left( a(\fr, V_P T_\fp^{(1)} f)- a(\fr, T_\fp^{(p)} V_P f) \right)\\
&= \varepsilon(\fp)^{-1} \left( a(\fr/P, T_\fp^{(1)} f)- a(\fr\fp, V_P f) \right)\\
&= \varepsilon(\fp)^{-1} \left( a(\fr\fp/P,f) +  a(\fr/(P\fp),\langle\fp\rangle  f) - a(\fr\fp/P, f) \right)\\
&= a(\fr/(P\fp),f), \text{while } 
\end{split}
\end{equation*}
\begin{equation*}
\begin{split}
a((0), V_{P\fp} f)
&= \varepsilon(\fp)^{-1} \left( a((0), V_P T_\fp^{(1)} f)- a((0), T_\fp^{(p)} V_P f) \right)\\
&= \varepsilon(\fp)^{-1} \left( a((0), T_\fp^{(1)} f)[P^{-1}] - a((0), V_P f)[\fp] \right)\\
&= \varepsilon(\fp)^{-1} \left( a((0),f)[\fp P^{-1}] +  a((0),\langle\fp\rangle  f)[(P\fp)^{-1}] - a((0), f)[\fp P^{-1}] \right)\\
&= a((0),f)[(P\fp)^{-1}].
\end{split}
\end{equation*}
Note that the calculations make sense even if $P$ does not divide~$\fr$; it is here that the squarefreeness is used.
\end{proof}

\begin{lemma}\label{lem:UV}
For $\fp \subset \fo$ a prime and $P \subset \fo$  squarefree, both dividing~$p$, we have
$$T_\fp^{(p)} V_P = \begin{cases}
V_{P/\fp}                       & \textnormal{ if } \fp\mid P,\\
V_P T_\fp^{(1)} - \varepsilon(\fp) V_{P\fp}  & \textnormal{ if } \fp\nmid P.
\end{cases}$$
\end{lemma}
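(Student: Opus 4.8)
The plan is to handle the two cases separately; the first is immediate and the second is a short $q$-expansion computation.

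When $\fp \nmid P$, there is nothing to prove beyond unwinding a definition. Since $\fp$ is then coprime to $P$, the recursive definition of $V_{P\fp}$ reads $\varepsilon(\fp) V_{P\fp} = V_P T_\fp^{(1)} - T_\fp^{(p)} V_P$; solving for $T_\fp^{(p)} V_P$ yields exactly the asserted identity $T_\fp^{(p)} V_P = V_P T_\fp^{(1)} - \varepsilon(\fp) V_{P\fp}$.

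When $\fp \mid P$, I would argue on adelic $q$-expansions and appeal to the fact that a Hilbert modular form is determined by its adelic $q$-expansion (a consequence of the $q$-expansion principle, Proposition~\ref{prop:q-exp}). This reduces the operator identity $T_\fp^{(p)} V_P = V_{P/\fp}$ to verifying that $T_\fp^{(p)} V_P f$ and $V_{P/\fp} f$ have the same adelic $q$-expansion for every $f \in M_1(\fn, \varepsilon; \Fbar_p)$. The crucial point is that over $\Fbar_p$ the coefficient $\Nm(\fp)^{p-1}$ occurring in Theorem~\ref{thm:Tp-q} vanishes, because $\Nm(\fp)$ is a positive power of $p$ and $p-1 \geq 1$. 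Thus $T_\fp^{(p)}$ acts on weight $p$ forms purely as a $U_\fp$-operator, i.e.\ $a(\fr, T_\fp^{(p)} g) = a(\fp\fr, g)$ for $(0) \neq \fr \subset \fo$ and $a((0), T_\fp^{(p)} g) = a((0), g)[\fp]$ --- precisely the $U_\fp$-behaviour in weight $k>1$ recorded earlier.

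Combining this with the $q$-expansion formula~\eqref{eq:Vp} for $V_P$, I would compute, for $(0) \neq \fr \subset \fo$,
\[
a(\fr, T_\fp^{(p)} V_P f) = a(\fp\fr, V_P f) = a(\fp\fr/P, f) = a(\fr/(P/\fp), f) = a(\fr, V_{P/\fp} f),
\]
where the middle equality uses $\fp \mid P$ to rewrite $\fp\fr/P = \fr/(P/\fp)$; and for the constant term,
\[
a((0), T_\fp^{(p)} V_P f) = a((0), V_P f)[\fp] = a((0), f)[P^{-1}][\fp] = a((0), f)[(P/\fp)^{-1}] = a((0), V_{P/\fp} f),
\]
using $[P^{-1}][\fp] = [\fp P^{-1}] = [(P/\fp)^{-1}]$ in the narrow class group ring. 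As $f$ was arbitrary, the operator identity follows. I do not expect a genuine obstacle here: the only things needing care are the vanishing $\Nm(\fp)^{p-1} = 0$ in characteristic $p$ and the elementary ideal arithmetic $\fp\fr/P = \fr/(P/\fp)$, both transparent once the squarefreeness of $P$ and $\fp \mid P$ are used.
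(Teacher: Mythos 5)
Your proof is correct and follows essentially the same route as the paper's: the case $\fp\mid P$ is verified on adelic $q$-expansions using that $\Nm(\fp)^{p-1}=0$ in characteristic~$p$, so that $T_\fp^{(p)}$ acts as $U_\fp$, exactly as in the paper. The only divergence is in the case $\fp\nmid P$, where you obtain the identity by rearranging the defining recursion --- legitimate, since the order-independence of $V_{P\fp}$ has already been established via~\eqref{eq:Vp} --- whereas the paper re-verifies it on $q$-expansions; your shortcut is valid and slightly cleaner.
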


\begin{proof}
We first compute the adelic $q$-expansion  of $T_\fp^{(p)} V_P f$:
\begin{equation*}
\begin{split}
 a(\fr, T_\fp^{(p)} V_P f) &= a(\fr \fp, V_P f) = a(\fr \fp/P, f), \text{and}\\
  a((0), T_\fp^{(p)} V_P f)& = a((0), V_P f)[\fp] = a((0), f)[\fp P^{-1}] .
 \end{split}
\end{equation*}
If $\fp \mid P$ this simplifies and equals $a(\fr, V_{P/\fp} f)$, as claimed.  
If $\fp\nmid P$ then 
\begin{equation*}
\begin{split}
  a(\fr, V_P T_\fp^{(1)} f)
&= a(\fr/P, T_\fp^{(1)} f)= a(\fr \fp / P, f) + \varepsilon(\fp) a(\fr/(P \fp), f)\\
&= a(\fr, T_\fp^{(p)} V_P f) + \varepsilon(\fp) a(\fr, V_{P\fp} f),
 \end{split}
\end{equation*}
\begin{equation*}
\begin{split}
  a((0), V_P T_\fp^{(1)} f)
&= a((0), T_\fp^{(1)} f)[P^{-1}]= a((0), f)[\fp P^{-1}] + \varepsilon(\fp) a((0), f)[(\fp P)^{-1}] \\
&= a((0), T_\fp^{(p)} V_P f) + \varepsilon(\fp) a((0), V_{P\fp} f),
 \end{split}
\end{equation*}
proving the claim.
\end{proof}

\section{Galois representations}

This section contains the proof of the main theorem.
It is based on the Hecke operators $T_\fp^{(k)}$ for $\fp \mid p$ and the Frobenius operators~$V_P$, constructed in the previous section.  A form will be called constant if its adelic $q$-expansion (see \S\ref{adelic-q-exp}) is reduced to its constant term  in  $\Fbar_p [\cC\ell_F^+] $. 
We start with a study of the $T_\fp^{(p)}$-action on  $h\cdot f$ where $f$ is a given non-constant  $T_\fp^{(1)}$-eigenform. 
Next we recall the construction of the Galois representation attached to a weight $1$ eigenform, by 
finding an ordinary eigenform of higher weight in characteristic~$0$ with congruent eigenvalues. 
Wiles' theorem  describing the local behaviour at $\fp$ of ordinary Galois representations along with some 
commutative algebra allows us to prove the theorem. 

Let  $\Sigma$ be any finite set of primes of~$F$ containing those dividing $\fn$, and  
consider the abstract Hecke algebra
$$\T' = \Zbar_p[T_\fq \;|\; \fq \subset \fo \textnormal{ prime }, \fq \not\in \Sigma,  \fq \nmid p].$$

Let $\varepsilon$ be a fixed $\Fbar_p^\times$-valued Hecke character of~$F$ of conductor dividing $\fn$.

\subsection{Hecke orbits}

Note that whereas all the forms $V_P f$, for a given non-constant  $\T'$-eigenform $f$ of weight $1$, 
are common eigenvectors for $\T'$ sharing the same  eigenvalues,
the following proposition will show that $T_\fp^{(p)}$ will never act as a scalar on their $\Fbar_p$-linear span,
although $\fp$ does not divide the level~$\fn$. 
This phenomenon of failure of strong multiplicity one in characteristic~$p$ has been studied in detail 
by one of the authors in \cite{wiese} when $F=\Q$ and is called ``doubling''.
As we will see, for a general $F$, one has ``doubling'' at each prime dividing~$p$.

\begin{prop}\label{prop:heckeorbit}
Let $f \in M_1(\fn,\varepsilon;\Fbar_p)$ be a non-constant eigenform for $T_\fp^{(1)}$ with eigenvalue $\lambda_\fp$ for all $\fp \mid p$.

\begin{enumerate}[(a)]
\item\label{item:ho-a} The elements $V_P f$, with $P$ running through all squarefree ideals of~$\fo$ dividing~$p$,  are linearly independent over $\Fbar_p$. Denote by $W$ their $\Fbar_p$-linear span.

\item\label{item:ho-b} For all primes $\fp$ dividing  $p$, the operator $T_\fp^{(p)}$ preserves  $W$ and is 
annihilated by  $X^2 - \lambda_\fp X + \varepsilon(\fp)$. 
In particular $T_\fp^{(p)}$ is invertible on~$W$. 

\item\label{item:ho-c} Fix a prime $\fp$ dividing~$p$ and, for all primes $\fp' \mid p $ distinct from~$\fp$,
fix a root $\alpha_{\fp'}$ of $X^2 - \lambda_{\fp'} X + \varepsilon(\fp')$.
Define
$$W_{\fp}=W[T_{\fp'}^{(p)} - \alpha_{\fp'} \;,\; \fp \neq \fp'\mid p]$$  
as the $\Fbar_p$-subspace of~$W$ on which $T_{\fp'}^{(p)}$ acts by scalar multiplication by $\alpha_{\fp'}$
for all primes $\fp' \mid p $ distinct from~$\fp$.  
Then $W_{\fp}$ has dimension $2$ and $T_\fp^{(p)}$ acts on it with minimal polynomial $X^2 - \lambda_\fp X + \varepsilon(\fp)$.
In particular, if the polynomial $X^2 - \lambda_\fp X + \varepsilon(\fp)$ has a double root, then 
$T_\fp^{(p)}$ does not act semi-simply on $W_{\fp}$. 
\end{enumerate}
\end{prop}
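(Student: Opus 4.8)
The plan is to reduce the whole proposition to the explicit $q$-expansion formulas \eqref{eq:Vp} and \eqref{eq:Tp-general}, the commutation rules of Lemma~\ref{lem:UV}, and the commutativity of the Hecke operators. For part (a) I would first note that, since $f$ is non-constant, there is an ideal $\fr_0$ coprime to $p$ with $a(\fr_0,f)\neq 0$: specialising \eqref{eq:Tp-general} to $k=1$ and using $\langle\fp\rangle f=\varepsilon(\fp)f$ yields $a(\fp\fr,f)=\lambda_\fp a(\fr,f)-\varepsilon(\fp)a(\fr/\fp,f)$ for every $\fp\mid p$, so an induction on the $p$-part of $\fr$ shows that vanishing of all coefficients prime to $p$ would force $f$ to be constant. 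Writing $P_S=\prod_{i\in S}\fp_i$ for subsets $S$ of the set of primes above $p$ and pairing a hypothetical relation $\sum_S c_S V_{P_S}f=0$ with the coefficient at $\fr=P_{S_0}\fr_0$, formula \eqref{eq:Vp} collapses the sum to $\sum_{S\subseteq S_0}c_S\,a(P_{S_0\setminus S}\fr_0,f)$, precisely because $\fr_0$ is prime to~$p$. An induction on $|S_0|$ kills the lower terms and leaves $c_{S_0}a(\fr_0,f)=0$, giving $c_{S_0}=0$ and hence linear independence.

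For part (b), Lemma~\ref{lem:UV} applied to $V_Pf$ (with $T_\fp^{(1)}f=\lambda_\fp f$) gives $T_\fp^{(p)}V_Pf=V_{P/\fp}f$ when $\fp\mid P$ and $T_\fp^{(p)}V_Pf=\lambda_\fp V_Pf-\varepsilon(\fp)V_{P\fp}f$ when $\fp\nmid P$; in both cases the image lies in $W$, so $W$ is preserved. Applying $T_\fp^{(p)}$ once more in each case and substituting back, a short calculation shows that $(T_\fp^{(p)})^2-\lambda_\fp T_\fp^{(p)}+\varepsilon(\fp)$ annihilates every $V_Pf$, hence all of $W$. As $\varepsilon(\fp)$ is a root of unity, and in particular nonzero, this polynomial has nonzero constant term, so $T_\fp^{(p)}$ is invertible on $W$.

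For part (c) I would make the product structure implicit in (b) explicit. Listing the primes above $p$ as $\fp=\fp_1,\dots,\fp_g$ and identifying the basis $\{V_{P_S}f\}$ of $W$ with the standard basis of $\bigotimes_{j=1}^g\Fbar_p^2$ via $V_{P_S}f\leftrightarrow\bigotimes_j e_{[j\in S]}$, the two relations above say exactly that $T_{\fp_j}^{(p)}$ acts as $M_j\otimes\mathrm{id}$, where $M_j=\left(\begin{smallmatrix}\lambda_{\fp_j}&1\\-\varepsilon(\fp_j)&0\end{smallmatrix}\right)$ is the companion matrix of $X^2-\lambda_{\fp_j}X+\varepsilon(\fp_j)$ on the $j$-th factor. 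Since these operators commute, the joint eigenspace $W_\fp$ on which $T_{\fp_j}^{(p)}=\alpha_{\fp_j}$ for all $j\geq 2$ is the tensor product of the whole first factor with the $\alpha_{\fp_j}$-eigenspaces of the $M_j$ for $j\geq 2$. Each $M_j$ is a non-scalar $2\times2$ matrix, hence non-derogatory, so each such eigenspace is one-dimensional; therefore $W_\fp$ is $2$-dimensional and $T_\fp^{(p)}$ acts on it as $M_1$, whose minimal polynomial equals its characteristic polynomial $X^2-\lambda_\fp X+\varepsilon(\fp)$. If this polynomial has a double root then $M_1$ is a single non-trivial Jordan block, so $T_\fp^{(p)}$ fails to be semi-simple on $W_\fp$.

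The step I expect to require the most care is the extraction of coefficients in~(a), namely securing a nonzero coefficient $a(\fr_0,f)$ at an ideal $\fr_0$ prime to~$p$, since this is exactly what makes the triangular induction diagonal. Once it is in place, the remainder is a mechanical unwinding of \eqref{eq:Vp}, \eqref{eq:Tp-general} and Lemma~\ref{lem:UV}, together with the tensor-product bookkeeping in~(c).
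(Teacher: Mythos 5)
Your proposal is correct and follows essentially the same route as the paper: both rest on the $q$-expansion formula \eqref{eq:Vp} evaluated at multiples of an ideal prime to~$p$ with nonvanishing coefficient for (a), and on the two relations of Lemma~\ref{lem:UV} for (b) and (c). The only differences are presentational — you obtain the coprime-to-$p$ ideal $\fr_0$ by a contrapositive induction on the $p$-part rather than by taking a divisibility-minimal nonzero coefficient and showing it is prime to~$p$, and your explicit tensor decomposition $W\simeq\bigotimes_j\Fbar_p^2$ with $T_{\fp_j}^{(p)}$ acting as a companion matrix on the $j$-th factor is exactly the structure the paper unwinds inductively via the subspaces $Z_P$ and the splitting $Z_{P\fp}=Z_P\oplus T_\fp^{(p)}Z_P$.
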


\begin{proof}
\eqref{item:ho-a}
Since $f$ is non-constant, there exists a non-zero integral ideal $\fr$ such that $a(\fr,f) \neq 0$,
but $a(\fr',f)=0$ for all proper divisors $\fr'$ of $\fr$.
We first show that $\fr$ is relatively prime to~$(p)$. Indeed, if $\fp\mid p$ were a prime dividing~$\fr$,
then~\eqref{eq:Tp-general} would imply that
$$\lambda_\fp\cdot  a(\fr\fp^{-1}, f) 
=a(\fr\fp^{-1}, T_\fp^{(1)} f)  = a(\fr ,f) + \varepsilon(\fp) a(\fr\fp^{-2}, f),$$
which is impossible since $a(\fr\fp^{-2}, f)=a(\fr\fp^{-1}, f)=0$ and $a(\fr ,f)\neq 0$.
In fact, this argument shows that $\fr$ is not divisible by any prime~$\fq$
such that  $f$ is a $T_\fq^{(1)}$-eigenform.
One should hence expect to be able to take $\fr=\fo$, {\it i.e.}, to take $f$ normalised, but we will not need this.

Suppose now that we have a linear combination $0 = \sum_{P \mid p} b_P V_Pf$.
We show $b_P=0$ by induction on the number of primes dividing~$P$.
To start the induction, note that $b_\fo = 0$ as $a(\fr, V_P f)=a(\fr/P, f)=0$ if $P \neq \fo$, and $a(\fr, V_\fo f)=a(\fr, f)\neq 0$.
Let now $R \mid p$ be squarefree and suppose $b_Q = 0$ for all proper divisors $Q \mid R$. Then using  \eqref{eq:Vp}
and the fact that $\fr$ is relatively prime to $P$ we find
$$0 = \sum_{P \mid p} b_P a(\fr R,V_Pf) =  \sum_{P \mid p} b_P a(\fr R/P,f)=
\sum_{P \mid R} b_P a(\fr R/P,f) = a(\fr, f) b_R,$$
hence $b_R=0$  as claimed.

\eqref{item:ho-b}
By Lemma~\ref{lem:UV} the space $W$  is stable under $T_{\fp}^{(p)}$ for all primes $\fp$ dividing $p$.
 For  $\fp\mid P$ we have 
$$T_\fp^{(p)2} V_P f = T_\fp^{(p)} V_{P/\fp}f = \lambda_\fp V_{P/\fp}f - \varepsilon(\fp) V_P f\\
= (\lambda_\fp T_\fp^{(p)} - \varepsilon(\fp)) V_P f,$$
whereas for $\fp\nmid P$ 
$$ T_\fp^{(p)2} V_P f = T_\fp^{(p)} (\lambda_\fp V_P - \varepsilon(\fp) V_{P\fp})f = (\lambda_\fp T_\fp^{(p)} - \varepsilon(\fp)) V_P f,$$
showing that $T_\fp^{(p)2} - \lambda_\fp T_\fp^{(p)} + \varepsilon(\fp)\id $ annihilates~$W$.

\eqref{item:ho-c} For any squarefree ideal $P$ dividing~$p$, we let $Z_P$ be the $\Fbar_p$-subspace of~$W$ having basis $V_Q f$ with $Q$ running through all divisors of~$P$.
Since $f$ is an eigenform for $T_\fp^{(1)}$, Lemma~\ref{lem:UV} implies that  $Z_{P \fp}$ is preserved by 
$T_\fp^{(p)} $ for all primes $\fp \nmid P$ and
\begin{equation}\label{eq:doubling}
Z_{P \fp} = Z_ P \oplus T_\fp^{(p)} Z_P,
\end{equation}
the sum being direct since $\dim(Z_{P \fp})=2\dim(Z_P)$ by~\eqref{item:ho-a}. 

Now, we show that $Z_P[T_{\fp'}^{(p)} - \alpha_{\fp'} \;,\; \fp'\mid P]$
is $1$-dimensional  by induction on the number of primes dividing~$P$.
Note that $Z_\fo$ is a line spanned by $V_\fo f$. 
Suppose that $Z_P[T_{\fp'}^{(p)} - \alpha_{\fp'} \;,\; \fp'\mid P]=\Fbar_p g$ and let $\fp\mid p$ be a prime not dividing~$P$.
By \eqref{eq:doubling} one has 
$$Z_{P \fp} [T_{\fp'}^{(p)} - \alpha_{\fp'} \;,\; \fp'\mid P]= \Fbar_p g\oplus \Fbar_p T_\fp^{(p)} g,$$
on which  $T_\fp^{(p)}$ does not act as a scalar, hence by \eqref{item:ho-b}
its minimal polynomial is $X^2 - \lambda_\fp X + \varepsilon(\fp)$.
It follows that  $Z_{P \fp} [T_{\fp'}^{(p)} - \alpha_{\fp'} \;,\; \fp'\mid P \fp]$ is $1$-dimensional, too. 

Taking $P$ to be the product of all primes dividing~$p$ and distinct from~$\fp$,
one gets $W_\fp= \Fbar_p g\oplus \Fbar_p T_\fp^{(p)} g$, yielding the desired result. 
\end{proof}

By counting dimensions, the above proposition implies that one can characterise $W$ as the smallest subspace of $M_p(\fn,\varepsilon;\Fbar_p)$ containing $h\cdot f$ that is stable under $T_\fp^{(p)}$ for all $\fp\mid p$.

\subsection{Construction of the Galois representations}\label{ssc:cons}

Let $f \in M_1(\fn;\Fbar_p)$ be any non-zero eigenform for~$\T'$.
By Lemma \ref{lem:lift}, there exists an integer $k_0>1$ such that in weight  $k=1+k_0(p-1)$ one has an isomorphism: 
$$M_k(\fn ; \Zbar_p) \otimes \Fbar_p \simeq M_k(\fn ; \Fbar_p).$$
Since $h^{k_0}f\in M_k(\fn ; \Fbar_p)$ is a $\T'$-eigenform with the same eigenvalues as $f$, by 
Deligne-Serre \cite[Lemma 6.11]{deligne-serre} there exists a $\T'$-eigenform $g \in M_k(\fn ; \Zbar_p)$
the eigenvalues of which lift those of~$f$.
We define then  $\rho_f:G_F\to \GL_2(\Fbar_p)$ 
as the semi-simplification of the reduction modulo $p$ of the $p$-adic Galois representation 
$\rho_{g}$ with representation space $V(g)$ attached to~$g$.
If $g$ is a cusp form, the Galois representation attached to~$g$ exists by~\cite{taylor}
 (since $g$ is ordinary, as we show below, the existence already follows from~\cite{wiles}).
The general case follows from the direct sum decomposition of $M_k(\fn,\C)$ into the cuspidal subspace and the  Eisenstein subspace.
The latter has a basis consisting of Eisenstein series attached to pairs of Hecke  characters; these Eisenstein series
are Hecke eigenforms and the direct sum of the two characters is the Galois representation attached to them (see \cite[\S 1.5]{wiles-padic}). 

By construction, $\rho_f$ is uniquely determined by $f$
since it is semi-simple, unramified at all $\fq\notin\Sigma$  not dividing~$p$,
and $\Tr(\rho_f(\Frob_\fq))$ equals the $T_\fq^{(1)}$-eigenvalue of  $f$.

\subsection{Ordinarity}
Let $f \in M_1(\fn;\Fbar_p)$ be any non-constant  $\T'$-eigenform.  
Since the operators $\langle\fq\rangle$ for $\fq\notin\Sigma$ commute with the action of  $\T'$, there exists an $\Fbar_p^\times$-valued Hecke character $\varepsilon$ of $F$ of conductor dividing $\fn$ and a form in  $M_1(\fn, \varepsilon ;\Fbar_p)$  sharing the same $\T'$-eigenvalues as $f$, that we will still denote $f$. Furthermore, since $\T'[T_{\fp}^{(1)}, \fp\mid p]$ acts  commutatively  on $M_1(\fn, \varepsilon ;\Fbar_p)$, we may and do assume that $f$ is also  an eigenform for  $T_{\fp}^{(1)}$ with some eigenvalue $\lambda_\fp$ for all primes $\fp$ dividing~$p$.

For every prime  $\fp$ dividing $p$, fix a root $\alpha_\fp$ of $X^2 - \lambda_\fp X + \varepsilon(\fp)$, which is never $0$.  By Proposition \ref{prop:heckeorbit}\eqref{item:ho-c}, there exists a $\T'$-eigenform $f_\alpha \in M_p(\fn,\varepsilon;\Fbar_p)$ sharing with $f$ the same $\T'$-eigenvalues  such that in addition $T_{\fp}^{(p)}  f_\alpha  = \alpha_\fp  f_\alpha $, for all $\fp\mid p$.  

For $k$ as in \S\ref{ssc:cons}, there exists a $\T'[T_{\fp}^{(p)}, \fp\mid p]$-eigenform $g \in M_k(\fn ; \Zbar_p)$ the eigenvalues of which lift those of $f_\alpha$, in particular $g$ is $\fp$-ordinary at all primes $\fp$ dividing $p$ and $T_{\fp}^{(k)}g=\tilde \alpha_\fp g$
with $\tilde \alpha_\fp$ lifting~$\alpha_\fp$.
Since the operators $\langle\fq\rangle$ for $\fq\notin\Sigma$ commute with the action of  $\T'[T_{\fp}^{(p)}, \fp\mid p]$ on 
$\in M_k(\fn ; \Zbar_p)$, we can assume in addition that $g$ has central character $\tilde \varepsilon$ lifting~$\varepsilon$.

By a theorem of Wiles \cite[Theorem~2]{wiles} (applied to a $p$-stabilisation of~$g$) for each $\fp\mid p$ one has a short exact sequence of $\Qbar_p[D_{\fp}]$-modules: 
 \begin{equation}\label{eq:ord}
  0 \to  V(g)^+ \to   V(g) \to V(g)^- \to  0 
 \end{equation}
such that $V_{g}^-$ has dimension $1$ over $\Qbar_p$ and the decomposition group $D_{\fp}$ of~$\fp$ acts on it by the unramified character sending $\Frob_\fp$ to the unique unit root of 
$X^2- \tilde \alpha_\fp X +\tilde \varepsilon(\fp)\Nm(\fp)^{k-1}$. 
 
Reduction modulo $p$ yields the following result. 

\begin{prop} \label{prop:ord} For any non-constant  $f$ as above and  for every prime  $\fp$ dividing $p$  and 
for any root $\alpha_\fp$ of $X^2 - \lambda_\fp X + \varepsilon(\fp)$, the representation 
$\rho_{f|D_{\fp}}$ admits a $1$-dimensional unramified quotient on which $\Frob_\fp$  acts by $\alpha_\fp$. 
\end{prop}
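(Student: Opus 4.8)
The plan is to deduce the statement by reducing the ordinarity sequence~\eqref{eq:ord} modulo~$p$ and then transporting the resulting unramified quotient through the semisimplification $\rho_f=(\bar\rho_g)^{\mathrm{ss}}$, where $\bar\rho_g$ denotes the reduction modulo~$p$ of the lift $\rho_g$ constructed above. The one substantive input beyond Wiles' sequence is that lifting into the high weight $k=1+k_0(p-1)$ forces the constant term of the $\Frob_\fp$-polynomial on the unramified quotient to degenerate modulo~$p$, so that its unit root becomes exactly~$\alpha_\fp$.

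First I would carry out the weight computation. As $\fp\mid p$ one has $\Nm(\fp)=p^{f}$ for the residue degree $f\geq 1$, and since $k-1=k_0(p-1)\geq 1$ the element $\tilde\varepsilon(\fp)\Nm(\fp)^{k-1}$ has strictly positive valuation and hence reduces to~$0$. Therefore $X^2-\tilde\alpha_\fp X+\tilde\varepsilon(\fp)\Nm(\fp)^{k-1}$ reduces to $X(X-\alpha_\fp)$. Because $\varepsilon(\fp)\neq 0$ gives $\alpha_\fp\neq 0$, exactly one root of the integral polynomial is a $p$-adic unit; writing $u$ for this unit root, the non-unit root reduces to~$0$, whence $u$ and $\tilde\alpha_\fp$ share the same image under $\Zbar_p\to\Fbar_p$, namely~$\alpha_\fp$. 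Thus the unramified character acting on $V(g)^{-}$ sends $\Frob_\fp$ to a unit whose reduction is~$\alpha_\fp$.

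Next I would reduce~\eqref{eq:ord} integrally, working over the ring of integers $\cO$ of a finite extension of $\Q_p$ large enough to contain the relevant eigenvalues. Choosing a $G_F$-stable $\cO$-lattice $T\subset V(g)$ reducing to $\bar\rho_g$, the saturation $T^{+}:=T\cap V(g)^{+}$ is $D_\fp$-stable with torsion-free quotient $T^{-}:=T/T^{+}$, a $D_\fp$-stable lattice in $V(g)^{-}$. Reduction yields a short exact sequence of $\Fbar_p[D_\fp]$-modules, exhibiting a one-dimensional unramified quotient $\psi$ of $\bar\rho_g|_{D_\fp}$ on which $\Frob_\fp$ acts by $\alpha_\fp$.

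The main obstacle is the last step, namely ensuring that $\psi$ persists as a genuine \emph{quotient} of $\rho_f|_{D_\fp}$ rather than as a mere subquotient after semisimplification. Here I would split into two cases according to the reducibility of $\bar\rho_g$ as a $G_F$-representation. If $\bar\rho_g$ is irreducible then $\rho_f=\bar\rho_g$ and $\psi$ is literally a quotient of $\rho_f|_{D_\fp}$. Otherwise $\bar\rho_g$ is an extension of two $G_F$-characters $\chi_1,\chi_2$, so $\rho_f=\chi_1\oplus\chi_2$ and $\rho_f|_{D_\fp}=\chi_1|_{D_\fp}\oplus\chi_2|_{D_\fp}$ is semisimple. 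Since the Jordan--H\"older constituents of a $D_\fp$-representation are unchanged by semisimplification and are compatible with restriction from $G_F$, the factor $\psi$ must coincide with one of $\chi_1|_{D_\fp}$ or $\chi_2|_{D_\fp}$; being a summand of a semisimple module it is in particular a quotient, which completes the argument.
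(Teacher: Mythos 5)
Your proposal is correct and follows essentially the same route as the paper, which after invoking Wiles' theorem simply states that ``reduction modulo $p$ yields the result''. You have accurately supplied the details the authors leave implicit: the observation that $\tilde\varepsilon(\fp)\Nm(\fp)^{k-1}$ is a non-unit so the unit root of the Hecke polynomial reduces to $\alpha_\fp$, the choice of a stable lattice with saturated $T^+$, and the case distinction on the irreducibility of $\bar\rho_g$ to pass the unramified quotient through the semisimplification defining~$\rho_f$.
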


\subsection{Proof of the main theorem}\label{main-proof}

Let $f \in M_1(\fn;\Fbar_p)$ be a $\T'$-eigenform as in the theorem and fix a prime $\fp$ dividing~$p$. 
As in the previous section we may and do assume that $f\in M_1(\fn, \varepsilon ;\Fbar_p)$ for some  $\Fbar_p^\times$-valued 
Hecke character $\varepsilon$ of $F$ of conductor dividing $\fn$ and that for all $\fp'$ dividing $p$ we have $T_{\fp'}^{(1)} f= \lambda_{\fp'} f$.
 For all $\fp'$ dividing~$p$, we choose a root $\alpha_{\fp'}$ of $X^2- \lambda_{\fp'} X + \varepsilon(\fp')$.

Assume first that $f$ is constant.  By~\eqref{eq:Tp-general}, for any prime $\fq\notin \Sigma$, $\fq\nmid p$ and any 
constant weight $1$ form $g$ we have
$$T_\fq^{(1)} g  = g[\fq] +\varepsilon(\fq) g[\fq^{-1}].$$
Now, consider the action by translation of the finite abelian group $\cC\ell_F^+$ on its 
group ring $\Fbar_p [\cC\ell_F^+]$. Since this action clearly commutes with the action of $\T'$, one can assume that 
$f$ is an eigenform for this $\cC\ell_F^+$-action, {\it i.e.} up to a non-zero scalar $f=\sum_{\fc\in \cC\ell_F^+} \varphi(\fc)[\fc^{-1}]$ for some 
character $\varphi:\cC\ell_F^+ \to \Fbar_p^\times$. It follows then from the above equation that the 
$T_\fq^{(1)}$-eigenvalue of $f$ equals $\varphi(\fq) + \varepsilon(\fq) \varphi^{-1}(\fq)$. 
This shows that the semisimple Galois representation $\rho_f$ is isomorphic to the Galois representation corresponding by
class field theory to $\varphi \oplus \varepsilon \varphi^{-1}$. In particular, it is unramified above~$p$.
Henceforth we will assume that $f$ is not constant. 

If $X^2- \lambda_\fp X + \varepsilon(\fp)$ has two distinct roots $\alpha_\fp$  and $\beta_\fp$  then Proposition \ref{prop:ord} implies that $\rho_{f|D_{\fp}}$ admits two distinct unramified quotients on which $\Frob_\fp$  acts by $\alpha_\fp$ and $\beta_\fp$, respectively, which proves that   $\rho_{f|D_{\fp}}$ is unramified in this case. Moreover, in this case, the trace of $\rho_f(\Frob_{\fp})$ equals $\alpha_\fp+\beta_\fp=\lambda_\fp$.

If $\rho_f$ is a sum of two characters, then Proposition \ref{prop:ord} implies that (at least) one of them is
unramified at~$\fp$. Moreover, as is well known, the determinant of $\rho_f$ equals $\varepsilon$, hence is unramified in $\fp$, 
which proves that $\rho_{f|D_{\fp}}$ is unramified also in this case.

In the sequel we treat the remaining case where $f$ is non-constant, $\rho_f$ is irreducible and $X^2- \lambda_\fp X + \varepsilon(\fp)$  has a double root $\alpha_\fp$.  Denote by $\fm'$ the maximal ideal of $\T'$ corresponding to~$f$.
Moreover, let $\fm$ be the maximal ideal of $\T'[T_{\fp'}^{(p)}, \fp'\mid p]$ corresponding to $f$ and the choice of the $\alpha_{\fp'}$'s.

Let $\T'_k$  be the image of the abstract Hecke algebra $\T'$  in the ring of 
endomorphisms of $M_k(\fn; \Zbar_p)_\fm$
(localisation of $M_k(\fn; \Zbar_p)$ at~$\fm$),
where $k=1+k_0(p-1)$ is as in Lemma~\ref{lem:lift}. 
In particular, the algebra generated by $\T'$ acting on $M_k(\fn; \Fbar_p)_\fm$ is naturally isomorphic to $\T'_k\otimes_{\Zbar_p} \Fbar_p$.

By the $q$-expansion principle, $M_k(\fn ; \Zbar_p)$ is a torsion free $\Zbar_p$-module,
hence $\T'_k$ is torsion free. Moreover,  $\T'_k$ is reduced as it is a $\Zbar_p$-lattice in the semi-simple algebra
$$ \T'_k\otimes_{\Zbar_p} \Qbar_p\simeq  \prod_{g \in \cN} \Qbar_p, $$ 
where $\cN$ denotes the set of all newforms contributing to  $M_k(\fn; \Zbar_p)_\fm \otimes \Qbar_p$. 

To make the rest of the argument more transparent, we prove a useful fact.   

\begin{lemma}\label{lem:key}
The Hecke operator $T_{\fp}^{(k)}$ acting on $M_k(\fn; \Fbar_p)_\fm$
does not belong to $\T'_k\otimes_{\Zbar_p} \Fbar_p$.
Moreover, $T_{\fp}^{(k)}$ acting on $M_k(\fn; \Zbar_p)_\fm$  belongs to 
$\T'_k\otimes_{\Zbar_p} \Qbar_p$, but does not belong to~$\T'_k$.

\end{lemma}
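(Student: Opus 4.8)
The plan is to establish the three assertions in turn, observing at the outset that the last one is a formal consequence of the first. Indeed, $T_\fp^{(k)}$ preserves the $\Zbar_p$-lattice $M_k(\fn;\Zbar_p)_\fm$, and by Lemma~\ref{lem:lift}, applied with $k=1+k_0(p-1)>k_0$, one has a Hecke-equivariant isomorphism $M_k(\fn;\Zbar_p)_\fm\otimes\Fbar_p\simeq M_k(\fn;\Fbar_p)_\fm$ compatible with all the operators involved. Hence, if $T_\fp^{(k)}$ lay in $\T'_k$, its reduction would lie in $\T'_k\otimes_{\Zbar_p}\Fbar_p$, contradicting the first assertion. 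It therefore suffices to prove the membership over $\Qbar_p$ and the non-membership over $\Fbar_p$.

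For the membership in $\T'_k\otimes_{\Zbar_p}\Qbar_p$, I would use that over $\Qbar_p$ the operator $T_\fp^{(k)}$ is the classical Hecke operator at~$\fp$, whose effect on $q$-expansions is~\eqref{Tq}; since $\fp\nmid\fn$, it is a \emph{good} Hecke operator. Under the isomorphism $\T'_k\otimes_{\Zbar_p}\Qbar_p\simeq\prod_{g\in\cN}\Qbar_p$, the module $M_k(\fn;\Zbar_p)_\fm\otimes\Qbar_p$ splits into the $\T'$-isotypic components $V_g$, each spanned by the oldform translates of a single newform~$g$ (distinct newforms yielding distinct $\T'$-eigensystems by strong multiplicity one). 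As $T_\fp^{(k)}$ commutes with $\T'$ and $\fp$ is prime to the level, it preserves each $V_g$ and acts there by the single eigenvalue $\tilde\alpha_\fp^{(g)}$ of~$g$. Thus $T_\fp^{(k)}$ corresponds to the tuple $(\tilde\alpha_\fp^{(g)})_{g\in\cN}$ and belongs to $\prod_{g\in\cN}\Qbar_p=\T'_k\otimes_{\Zbar_p}\Qbar_p$.

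The heart of the matter is the non-membership in $\T'_k\otimes_{\Zbar_p}\Fbar_p$, which I would deduce from the failure of semi-simplicity in Proposition~\ref{prop:heckeorbit}\eqref{item:ho-c}. In the case at hand $X^2-\lambda_\fp X+\varepsilon(\fp)$ has the double root $\alpha_\fp$, so on the two-dimensional space $W_\fp\subset M_p(\fn,\varepsilon;\Fbar_p)$ the operator $T_\fp^{(p)}$ acts with minimal polynomial $(X-\alpha_\fp)^2$, hence non-semi-simply. I would transport this picture into weight~$k$ via multiplication by $h^{k_0-1}$: since the Hasse invariant has $q$-expansion~$1$, this map preserves adelic $q$-expansions and, by the $q$-expansion principle (Proposition~\ref{prop:q-exp}), embeds $W_\fp$ injectively into $M_k(\fn,\varepsilon;\Fbar_p)$. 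The decisive point is that in characteristic~$p$ one has $\Nm(\fp)^{k-1}=\Nm(\fp)^{p-1}=0$, so by Theorem~\ref{thm:Tp-q} the operators $T_\fp^{(p)}$ and $T_\fp^{(k)}$ act on $q$-expansions by the same shift $a(\fr,\cdot)\mapsto a(\fp\fr,\cdot)$, and likewise $T_{\fp'}^{(p)}$ and $T_{\fp'}^{(k)}$ coincide for $\fp'\mid p$. Since these operators are determined by their action on $q$-expansions, injectivity of the embedding shows that $h^{k_0-1}W_\fp$ is stable under all of them, lies in $M_k(\fn;\Fbar_p)_\fm$, and that $T_\fp^{(k)}$ restricted to it is conjugate to $T_\fp^{(p)}|_{W_\fp}$, hence again non-semi-simple.

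Finally I would derive the contradiction. On $W_\fp$ every $T_\fq$ with $\fq\notin\Sigma$ and $\fq\nmid p$ acts by the scalar $\lambda_\fq$, because all the $V_P f$ share the $\T'$-eigenvalues of~$f$; the same then holds on $h^{k_0-1}W_\fp$. Consequently every element of $\T'_k\otimes_{\Zbar_p}\Fbar_p$, being an $\Fbar_p$-algebra combination of the images of the $T_\fq$, acts on this two-dimensional subspace as a scalar. Were $T_\fp^{(k)}$ to lie in $\T'_k\otimes_{\Zbar_p}\Fbar_p$, its restriction to $h^{k_0-1}W_\fp$ would be scalar, contradicting the non-semi-simple action just described; this gives the first assertion, and with it the third. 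The main obstacle is precisely the transfer step, where one must verify that the non-semi-simple $T_\fp^{(p)}$-action in weight~$p$ is faithfully reproduced by $T_\fp^{(k)}$ in weight~$k$, which rests on the coincidence of the two operators on $q$-expansions in characteristic~$p$ together with the $q$-expansion principle.
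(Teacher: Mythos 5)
Your proof is correct and follows essentially the same route as the paper's: membership in $\T'_k\otimes_{\Zbar_p}\Qbar_p$ via strong multiplicity one applied to the newforms in $\cN$, reduction of the integral non-membership to the mod-$p$ statement via the lifting isomorphism of Lemma~\ref{lem:lift}, and the contradiction obtained by embedding $W_\fp$ into weight~$k$ via multiplication by $h^{k_0-1}$ and observing that $\T'$ acts there by a character while $T_\fp^{(k)}$ acts non-semisimply. The only difference is that you spell out the $q$-expansion verification of the transfer step (both operators acting as the shift $a(\fr,\cdot)\mapsto a(\fp\fr,\cdot)$ since $\Nm(\fp)^{k-1}=0$ in characteristic~$p$), which the paper leaves implicit in the phrase ``as one can see from $q$-expansions''.
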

\begin{proof}
Since $\fp$ does not divide the level~$\fn$, Strong Multiplicity One applied to each 
$g\in \cN$ implies that $T_{\fp}^{(k)}\in \T'_k\otimes_{\Zbar_p} \Qbar_p$. 

By Lemma \ref{lem:lift} one has  a $\T'[T_{\fp}]$-equivariant  isomorphism:
$$M_k(\fn ; \Zbar_p)_\fm  \otimes \Fbar_p \simeq M_k(\fn ; \Fbar_p)_\fm. $$
It follows that if $T_{\fp}^{(k)}$ belonged to  $\T'_k$ acting on $M_k(\fn ; \Zbar_p)_\fm$, then
$T_{\fp}^{(k)}$ acting on $M_k(\fn ; \Fbar_p)_\fm$ would belong to $\T'_k \otimes \Fbar_p$ and  we will now show that this is impossible.

Define $W_\fp$ as in Proposition~\ref{prop:heckeorbit}\eqref{item:ho-c}. Then,
as one can see from $q$-expansions, there are $\T'[T_{\fp}]$-equivariant  inclusions  
$$ M_k(\fn ; \Fbar_p)_\fm \supset  h^{k_0-1}  M_p(\fn ; \Fbar_p)_\fm \supset h^{k_0-1}  M_p(\fn ; \Fbar_p)[\fm']_\fm \supset h^{k_0-1} W_{\fp}.$$
Hence, if $T_{\fp}^{(k)}$ belonged to $\T'_k \otimes \Fbar_p$, then $T_{\fp}^{(p)}$ would also belong to
$\T'$ acting on $W_{\fp}$. However $\T'$ acts on $W_{\fp}$ by a character, whereas by 
Proposition~\ref{prop:heckeorbit}\eqref{item:ho-c} the action of $T_{\fp}^{(p)}$ on $W_{\fp}$  is not semisimple. 
\end{proof}

Since  $\rho_f$ is irreducible, there exists by  \cite[Th\'eor\`eme~2]{carayol}  a free $\T'_k$-module $\cM$ of  rank $2$ with a continuous action of  $G_F$  such that one has a $G_F$-equivariant isomorphism of 
$\Qbar_p$-vector spaces:
$$\cM\otimes_{\Zbar_p} \Qbar_p\simeq \prod_{g\in \cN} V(g).$$
For  all $\fp'\mid p$,  the $T_{\fp'}^{(k)}$-eigenvalue of any  $ g\in \cN$ belongs to $\Zbar_p^\times$ (since it reduces  to $\alpha_{\fp'}\in  \Fbar_p^\times$),  hence there exists a short exact sequence
of $\Qbar_p[D_{\fp}]$-modules  (\ref{eq:ord}) for each $ g\in \cN$. 

Letting $\cM^+ = \cM \cap \prod_{g\in \cN} V(g)^+$ and $\cM^-$ be  the image of $\cM$ in $\bigoplus_{g\in \cN} V(g)^-$, one obtains a short exact sequence of $\T'[D_{\fp}]$-modules:
\begin{equation}\label{eq:M-exact}
0 \to  \cM^+ \to \cM \to \cM^- \to 0.
\end{equation}
Reducing (\ref{eq:M-exact}) modulo~$\fm'$, we get an exact sequence of $\Fbar_p[D_{\fp}]$-modules
$$ \cM^+/\fm' \cM^+ \to \cM/\fm' \cM \to \cM^-/\fm' \cM^- \to 0.$$
Since $\cM^-\neq 0$ Nakayama's lemma for the local algebra $\T'_k$ implies $\cM^-/\fm'\cM^-\neq 0$. 

If  $\dim_{\Fbar_p}(\cM^-/\fm'\cM^-)=1$, then   Nakayama's lemma yields a surjective 
homomorphism  $\T'_k \twoheadrightarrow \cM^-$ of $\T'_k$-modules. Since $\T'_k$ and $\cM^-$ have the same $\Zbar_p$-rank (equal to half the $\Zbar_p$-rank of $\cM$) we deduce that $\cM^-$ is free of rank $1$ over $\T'_k$, in particular $\Frob_\fp$ acts on $\cM^-$ as some element 
$U\in (\T'_k)^\times$. By (\ref{eq:ord}), the image of $U$ in $\T'_k \otimes \Fbar_p$ equals $T_\fp^{(k)}$ since for all $g \in \cN$ the
eigenvalue of $U$ on $g$ is the unique unit root of the Hecke polynomial $X^2-T_{\fp}^{(k)}X+ \langle \fp \rangle \Nm(\fp)^{k-1}$ of $g$ at~$\fp$.
Hence the endomorphism $T_{\fp}^{(k)}$ of $M_k(\fn; \Fbar_p)_\fm$ belongs to $\T'_k\otimes_{\Zbar_p} \Fbar_p$, contradicting Lemma~\ref{lem:key}. 

Therefore  $\dim_{\Fbar_p}(\cM^-/\fm'\cM^-)=2$ hence  $\cM/\fm' \cM \simeq \cM^-/\fm' \cM^-$ implying that 
$\rho_f$ is unramified at~$\fp$.
Moreover, since $\cM^-/\fm' \cM^-$ is a quotient of two lattices in $\prod_{g\in \cN} V(g)^-$ and since 
$\Frob_\fp$ acts on each $V(g)^-$ by a scalar reducing modulo $p$ to $\alpha_\fp$, if follows that 
$\Frob_\fp$ acts  on $\cM/\fm' \cM$ via a matrix 
$\left(\begin{smallmatrix} \alpha_\fp & * \\ 0 & \alpha_\fp \end{smallmatrix} \right)$, thus has trace~$\lambda_\fp=2\alpha_\fp$.

\begin{rem}\label{u-operator}
The fact that the unramified quotient $\cM^-$ of $\cM$ is not free over $\T'_k$, means that the representations 
$G_F\to \GL_2(\T'_k)$ is not ordinary at $\fp$, but would become ordinary once the $U$-operator has been adjoined to  $\T'_k$. 
\end{rem}

\begin{small}
\noindent
\begin{tabular}{lll}
Mladen Dimitrov & \hspace*{.6cm}& Gabor Wiese\\ 
Universit\'e Lille 1 && University of Luxembourg\\
UFR Mathématiques && Mathematics Research Unit\\
Cit\'e Scientifique && 6, avenue de la Fonte\\
59655 Villeneuve d'Ascq Cedex && L-4364 Esch-sur-Alzette\\
FRANCE && LUXEMBOURG\\ 
{\tt mladen.dimitrov@math.univ-lille1.fr} && {\tt gabor.wiese@uni.lu}\\
\end{tabular}
\end{small}

\end{document}